\documentclass{amsart}
\usepackage{amsmath, amsthm, amssymb, graphicx, tikz, zref}
\usetikzlibrary{arrows}
\usepackage[justification=centering]{caption}
\usepackage{enumitem}
\usepackage{cite}
\newtheorem{theorem}{Theorem}[section]

\newtheorem{lemma}[theorem]{Lemma}
\newtheorem{example}[theorem]{Example}

\newtheorem{proposition}[theorem]{Proposition}

\newtheorem{definition}[theorem]{Definition}

\newcommand{\trop}{{\mathcal{H}}}
\newcommand{\relint}{\textnormal{Relint}}

\newcommand{\subdiv}[1]{{SD}_{#1}}
\newcommand{\tropical}[1]{{Trop}_{#1}}

\newcommand{\convh}{\textnormal{convh}}

\newcommand{\Hom}{\textnormal{Hom}}

\usepackage{hyperref}
\usepackage{xcolor}
\hypersetup{
    colorlinks,
    linkcolor={red!50!black},
    citecolor={blue!50!black},
    urlcolor={blue!80!black}
}

\title{Painted Tropical Complexes}
\author{Gabriel Kerr}
\author{Sophia Palcic}
\date{July 2023}

\begin{document}

\maketitle
\begin{abstract}
    We define the notion of a painted tropical $A$-complex and describe a poset structure on the set of all such complexes. We show that this poset is equivalent to the face lattice of a secondary polytope $\Sigma (\bar{A}_\alpha )$ where $\bar{A}_\alpha$ is built from $A$ and an additional point $\alpha$. As a central application, we show that multiplihedra are also secondary polytopes.
\end{abstract}
\section{Introduction}
In \cite{forcey}, Forcey gave a concrete realization of multiplihedra as polytopes which was further explored in \cite{mauwood}. Such multiplihedra give the combinatorial data needed to describe $A_\infty$-morphisms between $A_\infty$-algebras and have received renewed attention in \cite{bottman, lapmaz}. This paper presents an alternative construction which realizes multiplihedra as secondary polytopes of certain three dimensional polytopes. Rather than focus from the start on obtaining this specific result, our approach is to give a more general construction of a painting fan $\mathcal{F}_{A, \alpha}$ associated to a finite set $A$ and an additional marked $\alpha$, both in a real vector space. Our main result is Theorem~\ref{thm:mainthm} which states that the face lattice of $\mathcal{F}_{A, \alpha}$ is dual to that of a particular secondary polytope which we call the painting polytope of $A$ relative to $\alpha$. The idea being that multiplihedra are but one instance of this more general class of polytopes whose face lattice keeps track of linear paintings on a tropical hypersurface with fixed marked Newton polytope.

\section*{Acknowledgements:}
The first author would like to thank Nate Bottman for early discussions on multiplihedra and $2$-associahedra which formed the original inspiration for this paper. Both authors appreciate the input and suggestions given by Mikhail Mazin and Dasha Poliakova regarding this work. 

\section{Background and notation}
This section contains a brief account of constructions and results that can be found in the literature \cite{gross, gkz, bilstu}, but slightly recast to suit our purposes. A \textbf{marked polytope} is a pair $(Q, A)$ consisting of a convex polytope $Q$ in an $n$-dimensional real vector space $V$ along with a finite set $A$ in $Q$ whose convex hull is $Q$. We will call $(Q, A)$ a marked simplex if $\dim (Q) = |A| - 1$ (implying it is a simplex and $A$ consists only of its vertices). We denote by $\mathbb{R}^A$ the vector space of functions $\eta : A \to \mathbb{R}$ and write $\{e_a\}_{a \in A}$ for the standard basis. Fixing a particular $\eta \in \mathbb{R}^A$ induces two  polyhedral complexes, one subdividing the polytope $Q$ and the other subdividing the vector dual $V^* = \Hom_{\mathbb{R}} (V, \mathbb{R})$. The face lattices of these decompositions are dual to each other. For what follows we will refer to two running examples to illustrate the various structures which we introduce.
\begin{figure}[ht]
\begin{tikzpicture}[cross line/.style={preaction={draw=white, -, line width=6pt}}]
	\node[anchor=south west,inner sep=0] (image) at (3,0) {\includegraphics[scale=.3]{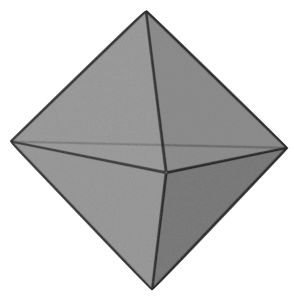}};
     \node[anchor=south west,inner sep=0] (image) at (-3,0) {\includegraphics[scale=.085]{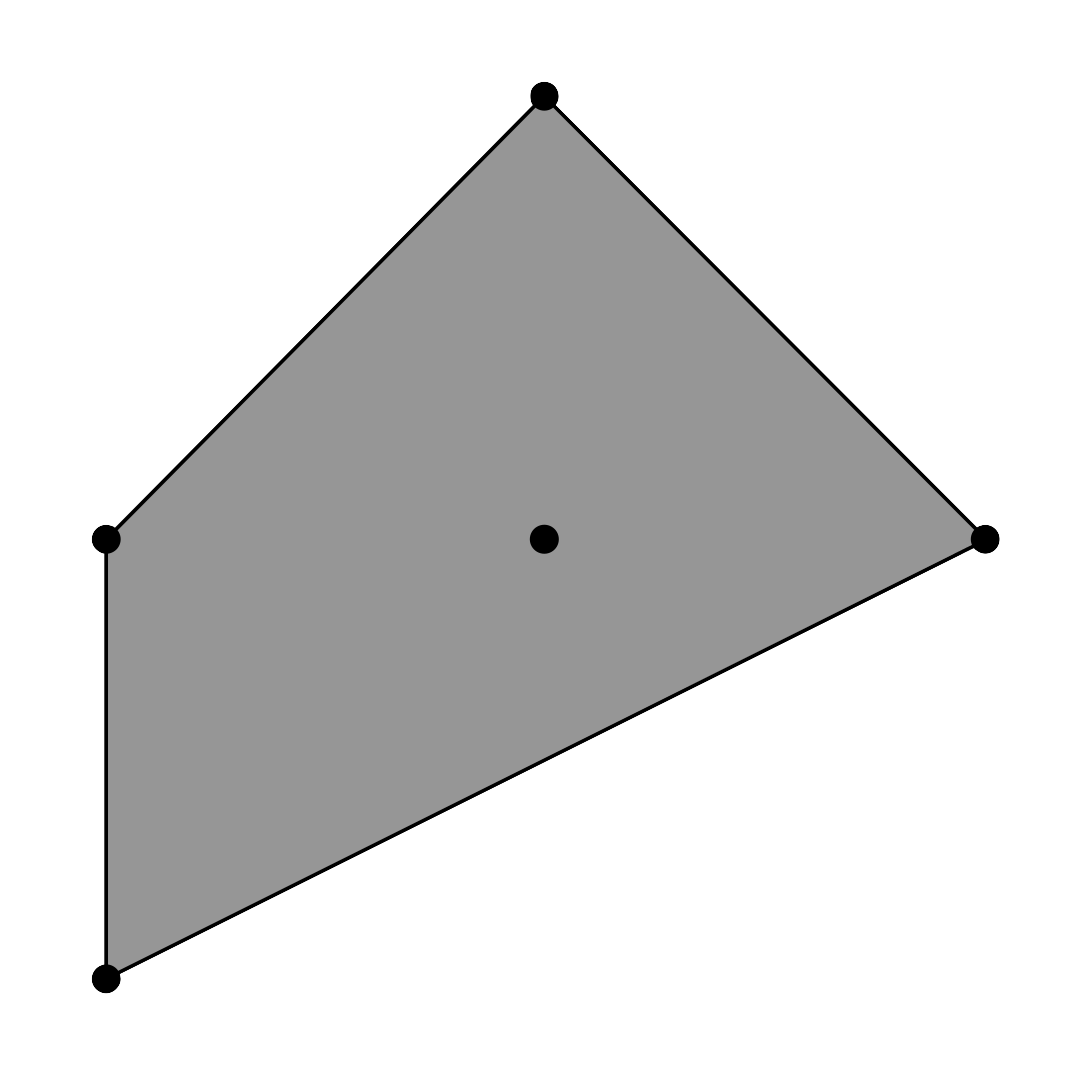}};
	%\node at (5.7,3) {\small $g$};
	%\node at (6.3,4.35) {\tiny $y = \infty$};
	%\node at (6.3,0.65) {\tiny $y = -\infty$};
	%\node at (9.3,1.2) {\small $H = x$};
\end{tikzpicture}
\caption{\label{fig:example1a2a} Examples $(Q, A)$ and $(\tilde{Q}, \tilde{A})$.}
\end{figure}
\begin{example} \label{ex:qwi0}
The case of vertices of a polygon will be considered in Section~\ref{sec:multiplihedra}. In general, the point configurations will have interior markings or points on the boundary. For example, take
\begin{align*}
A = \left\{ (0,0), (1,0), (0,1), (-1,0), (-1,-1) \right\}
\end{align*}
and observe that $Q$ is a quadrilateral with the origin as an interior point as illustrated in Figure~\ref{fig:example1a2a}.
\end{example}
\begin{example} \label{ex:bp0}
For a $3$-dimensional example, we choose 
\begin{align*}
\tilde{A} = \left\{ (1, 0, 0), (0, 1, 0), (-1,-1, 0), (0, 0, 1), (0, 0, -1) \right\}.
\end{align*}
Sets such as this whose affine lattice of relations is rank $1$ are known as  circuits. Its convex hull $\tilde{Q}$ is a bipyramid and illustrated in Figure~\ref{fig:example1a2a}.
\end{example}

\subsection{Secondary polytopes} Given $\eta \in \mathbb{R}^A$, one constructs the polyhedron $Q_\eta$ as
\begin{align*}
    Q_\eta = \convh \left\{ ( a , r ) : r \leq - \eta (a) \right\} \subset V \times \mathbb{R}.
\end{align*}
Notice that projecting to $V$ simply takes $Q_\eta$ to $Q$. Taking the compact faces of $Q_\eta$ which form the upper boundary, and projecting thus results in a polyhedral subdivision $\mathcal{S}_\eta$ of $Q$.

Let us make this description more precise. Define the function $g_\eta : Q \to \mathbb{R}$ by taking
\begin{align*}
    g_\eta (v) := \max \{ r : (v, r) \in Q_\eta \}.
\end{align*}
Then the graph of $g_\eta$ is the upper boundary of $Q_\eta$ and it follows that $g_\eta$ is a concave piecewise linear function. Thus for each compact facet $F_i$ of $Q_\eta$, there is an affine function $g_{\eta, i} - c_{\eta, i} : V \to \mathbb{R}$ uniquely defined by
\begin{align} \label{eq:defgi}
    g_{\eta, i} (v) - c_{\eta, i} = g_\eta (v)
\end{align}
for $v$ in the projection $Q_i$ of $F_i$. Here $g_{\eta, i} \in V^*$ and $c_{\eta, i} \in \mathbb{R}$. Then for any $v \in Q$ we have
\begin{align*}
     g_{\eta, i} (v) \geq g_{\eta} (v)
\end{align*}
with equality if and only if $v$ is in $F_i$. In other words, the graph of $g_{\eta, i} - c_{\eta, i}$ is the hyperplane supporting the facet $F_i$. We may promote $Q_i$ to a marked polytope $(Q_i, A_i)$ by taking $A_i$ to be the elements in $A$ for which $g_{\eta, i} (a) - c_{\eta, i} = \eta (a)$. 
Then we call the collection of marked polytopes and their faces 
\begin{align*}
    \mathcal{S}_\eta = \{ (Q_i, A_i) \}
\end{align*} 
the subdivision of $Q$ induced by $\eta$ and write $\mathcal{S}_\eta (k)$ for the subcollection of $k$-dimensional faces of $\mathcal{S}_\eta$. 
\begin{figure}[ht]
\begin{tikzpicture}[cross line/.style={preaction={draw=white, -, line width=6pt}}]
	\node[anchor=south west,inner sep=0] (image) at (-3,0) {\includegraphics[scale=.28]{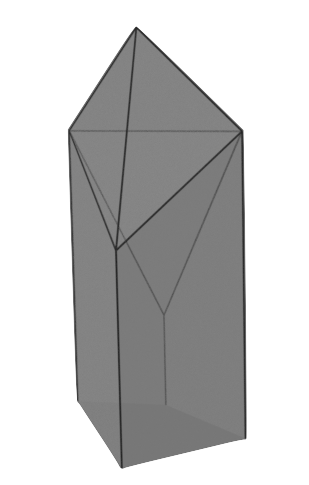}};
     \node[anchor=south west,inner sep=0] (image) at (3,1) {\includegraphics[scale=.085]{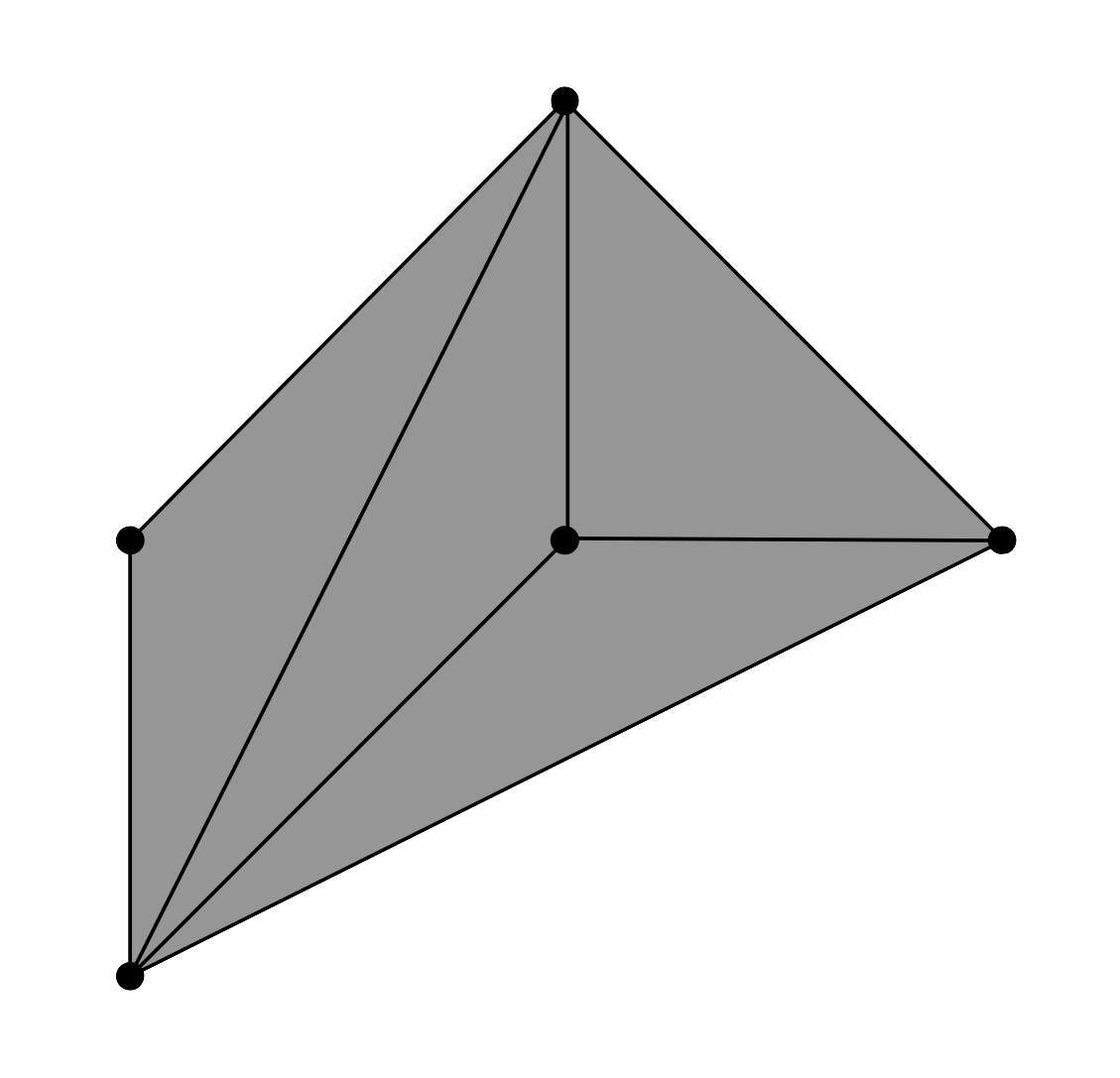}};
	%\node at (5.7,3) {\small $g$};
	%\node at (6.3,4.35) {\tiny $y = \infty$};
	%\node at (6.3,0.65) {\tiny $y = -\infty$};
	%\node at (9.3,1.2) {\small $H = x$};
\end{tikzpicture}
\caption{\label{fig:example1b1c} The polyhedron $Q_\eta$ and its induced subdivision $\mathcal{S}_\eta$.}
\end{figure}

\begin{example} \label{ex:qwi1}
Figure~\ref{fig:example1b1c} illustrates the polyhedron $Q_\eta$ for 
\[\eta = - e_{(0,0)} + e_{(1,0)} + 2 e_{(-1,0)} \]
as well as the associate subdivision $\mathcal{S}_\eta$.
\end{example}

\begin{definition}\cite{gkz} 
A \textbf{subdivision} $\mathcal{S} = \{(Q_i, A_i)\}_{i \in I}$ of $(Q,A)$ is a collection of marked polytopes $(Q_i, A_i)$ with $A_i \subset A$ for which
\begin{enumerate}
    \item every face of $(Q_i, A_i)$ is in $\mathcal{S}$,
    \item the intersection of two polytopes is a face of each,
    \item the union of $Q_i$ is $Q$.
\end{enumerate}
The subdivision $\mathcal{S}$ is called \textbf{coherent} if it is induced by some $\eta \in \mathbb{R}^A$. If every $(Q_i, A_i)$ is a marked simplex, $\mathcal{S}$ is called a \textbf{triangulation}. 
\end{definition}

If $\mathcal{S}_1$ and $\mathcal{S}_2$ are subdivisions so that every $(Q_i, A_i)$ of $\mathcal{S}_2$ is subdivided by marked polytopes of $\mathcal{S}_1$, we say that $\mathcal{S}_1$ refines $\mathcal{S}_2$ and write $\mathcal{S}_1 \preceq \mathcal{S}_2$. This makes coherent subdivisions of $(Q,A)$ a poset.

We may stratify $\mathbb{R}^A$ by identifying two functions if and only if they induce identical subdivisions. We give a little notation for this by writing, for any coherent subdivision $\mathcal{S}$, 
\begin{align*}
    C^\circ_{\mathcal{S}} := \{ \eta \in \mathbb{R}^A : \mathcal{S}_\eta = \mathcal{S} \}
\end{align*}
and $C_{\mathcal{S}}$ for its closure. 

\begin{proposition}\cite{gkz} \label{prop:secondaryfan}
For any coherent subdivision $\mathcal{S}$, $C_{\mathcal{S}}$ is a polyhedral cone in $\mathbb{R}^A$. The collection 
\begin{align*}
    \mathcal{F}_{Sec A} = \{C_{\mathcal{S}} : \mathcal{S} \textnormal{ a coherent subdivision of }(Q,A) \}
\end{align*}
forms a complete fan in $\mathbb{R}^A$. Furthermore, $C_{\mathcal{S}_2}$ is a face of $C_{\mathcal{S}_1}$ if and only if $\mathcal{S}_1$ refines $\mathcal{S}_2$. The fan $\mathcal{F}_{Sec A}$ is called the \textbf{secondary fan} of $A$.
\end{proposition}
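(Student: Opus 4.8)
The plan is to prove this classical result (Proposition~\ref{prop:secondaryfan}) by first establishing that each $C_{\mathcal{S}}$ is a polyhedral cone, then showing the cones fit together into a complete fan with the stated face relations.

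First I would analyze the cone structure of $C^\circ_{\mathcal{S}}$ directly from the definitions. Fix a coherent subdivision $\mathcal{S}$ with maximal cells $(Q_i, A_i)$. For each maximal cell, the concavity of $g_\eta$ together with the defining equation~\eqref{eq:defgi} translates into a system of linear inequalities on $\eta \in \mathbb{R}^A$: for each facet $F_i$ with affine support function $g_{\eta,i} - c_{\eta,i}$, the condition $g_{\eta,i}(a) - c_{\eta,i} \geq \eta(a)$ must hold for every $a \in A$, with equality precisely for $a \in A_i$. The key observation is that $g_{\eta,i}$ and $c_{\eta,i}$ depend \emph{linearly} on $\eta$ once the combinatorial type $\mathcal{S}$ is fixed: indeed, for $a \in A_i$ the equalities $g_{\eta,i}(a) - c_{\eta,i} = \eta(a)$ form a linear system with a fixed coefficient matrix (determined by the affine positions of the points in $A_i$), and because $(Q_i, A_i)$ spans affinely, this system determines $(g_{\eta,i}, c_{\eta,i})$ as a linear function of the values $\{\eta(a) : a \in A_i\}$. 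Substituting these linear expressions back into the inequalities $g_{\eta,i}(a) - c_{\eta,i} \geq \eta(a)$ for $a \notin A_i$, I obtain that $C_{\mathcal{S}}$ is cut out by finitely many linear inequalities, hence is a polyhedral cone, with $C^\circ_{\mathcal{S}}$ its relative interior where the strict inequalities hold.

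Next I would address the fan and face structure. The refinement relation $\mathcal{S}_1 \preceq \mathcal{S}_2$ means $\mathcal{S}_2$ is obtained by forgetting some of the strict inequalities defining $\mathcal{S}_1$ (equivalently, passing from strict to non-strict in some of them corresponds to cells merging). Concretely, $C_{\mathcal{S}_2}$ is the set of $\eta$ for which certain of the inequalities above become equalities, so it is exactly the face of $C_{\mathcal{S}_1}$ obtained by intersecting with the corresponding supporting hyperplanes; this gives one direction of the face correspondence. For the reverse, I would argue that any face of the cone $C_{\mathcal{S}_1}$ is itself of the form $C_{\mathcal{S}_2}$ for some coarsening $\mathcal{S}_2$, by reading off which equalities are forced on that face and checking that the resulting set of marked polytopes satisfies the three subdivision axioms. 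Completeness of the fan follows because \emph{every} $\eta \in \mathbb{R}^A$ induces some coherent subdivision $\mathcal{S}_\eta$ via the construction of $Q_\eta$, so the union of all $C_{\mathcal{S}}$ is all of $\mathbb{R}^A$; the cones meet along common faces precisely because two subdivisions induced by nearby $\eta$ must share a common coarsening.

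The main obstacle I anticipate is not the cone property, which is essentially a matter of unwinding the linearity of $g_{\eta,i}$ in $\eta$, but rather rigorously establishing the \emph{two-way} face correspondence and the compatibility needed for the fan axioms. In particular, verifying that every face of $C_{\mathcal{S}_1}$ genuinely corresponds to a coherent coarsening (and not merely an abstract polyhedral face) requires care: one must show that relaxing a subset of the defining equalities yields an $\eta$ whose induced subdivision is exactly the expected $\mathcal{S}_2$, which entails checking that the merged cells still arise as projections of facets of $Q_\eta$. I would handle this by a dimension/genericity argument, choosing $\eta$ in the relative interior of the candidate face and verifying directly that $g_\eta$ has the claimed breakpoint structure. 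Since this proposition is cited from \cite{gkz}, I would expect the paper to invoke it as known; my reconstruction above follows the standard development of the secondary fan from the theory of regular (coherent) subdivisions.
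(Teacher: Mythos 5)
The paper gives no proof of this proposition: it is quoted from \cite{gkz} and used as a black box, so there is no argument of the authors' own to compare yours against. Your reconstruction follows the standard development of the secondary fan and is correct in outline. Two places would need more care in a complete write-up. First, for a maximal cell $(Q_i, A_i)$ with $|A_i| > \dim V + 1$ the system $g_{\eta,i}(a) - c_{\eta,i} = \eta(a)$ for $a \in A_i$ is overdetermined; its consistency conditions are themselves linear \emph{equations} on $\eta$ and must be listed among the defining conditions of $C_{\mathcal{S}}$ — your phrase ``with equality precisely for $a \in A_i$'' covers this implicitly, but these equalities (not only the strict inequalities for $a \notin A_i$) are what can cut the dimension of the cone, so they should appear explicitly. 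Second, the fan axiom that two cones meet along a common face is the real content of the statement; ``two subdivisions induced by nearby $\eta$ must share a common coarsening'' is the right idea, but it needs to be upgraded to the claim that $C_{\mathcal{S}_1} \cap C_{\mathcal{S}_2} = C_{\mathcal{S}}$ for the finest common coherent coarsening $\mathcal{S}$ of $\mathcal{S}_1$ and $\mathcal{S}_2$, which in turn rests on the two-way face correspondence you correctly flag as the main obstacle. These are precisely the points settled in Chapter~7 of \cite{gkz}, and your plan of attack is consistent with that treatment.
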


A surprising fact about the fan $\mathcal{F}_{Sec A}$ is that it is also the normal fan of a polytope $\Sigma (A)$ called the \textbf{secondary polytope} of $(Q,A)$. As we are choosing the min convention, a normal fan here consists of cones $\{\sigma_F \}$ which are minimal on faces $F$ of $Q$. The secondary polytope has a rather pleasant explicit description. To provide this, let $\mathcal{T}= \{(Q_i, A_i)\}$ be a coherent triangulation, $\{e_a\}$ the basis vectors of the dual of $\mathbb{R}^A$, and $\text{Vol} (Q_i)$ the normalized volume of a simplex. Then we may define the vectors 
\begin{align}
    \varphi_{\mathcal{T}} = \sum_{a \in A} \left( \sum_{a \in Q_i} \textnormal{Vol} (Q_i) \right) e_a^* \in \left( \mathbb{R}^A \right)^*.
\end{align}

\begin{theorem}\cite{gkz}
The secondary polytope 
\[
\Sigma (A) = \convh \{\varphi_{\mathcal{T}} : \mathcal{T} \text{ a coherent triangulation of } (Q, A) \}
\]
has normal fan equal to $\mathcal{F}_{Sec A}$. Thus the face lattice of $\Sigma (A)$ is isomorphic to coherent subdivisions under the partial order of refinement.
\end{theorem}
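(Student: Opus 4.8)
The plan is to establish the two halves of the statement separately: first that the normal fan of $\Sigma(A)$ equals $\mathcal{F}_{Sec A}$, and then deduce the face-lattice isomorphism as a formal consequence. Since Proposition~\ref{prop:secondaryfan} already gives us that $\mathcal{F}_{Sec A}$ is a complete fan whose face relations encode refinement, the essential work is to identify the cones $C_{\mathcal{S}}$ with the normal cones of the polytope $\convh\{\varphi_{\mathcal{T}}\}$. I would begin by recalling the min-convention definition of the normal fan: for each face $F$ of $\Sigma(A)$, the cone $\sigma_F$ consists of those $\eta \in (\mathbb{R}^A)^{**} = \mathbb{R}^A$ that are minimized over the vertex set $\{\varphi_{\mathcal{T}}\}$ precisely on $F$. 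The goal is to show $\sigma_F = C_{\mathcal{S}}$ for the subdivision $\mathcal{S}$ corresponding to $F$.

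First I would treat the vertices, i.e.\ the maximal cones. Fixing $\eta \in \mathbb{R}^A$ that induces a coherent triangulation $\mathcal{T}_\eta = \{(Q_i, A_i)\}$, I would compute the pairing $\langle \eta, \varphi_{\mathcal{T}}\rangle = \sum_{a \in A} \eta(a) \sum_{a \in Q_i}\textnormal{Vol}(Q_i)$ and reorganize the double sum to run over the simplices of $\mathcal{T}$, obtaining $\sum_i \textnormal{Vol}(Q_i)\sum_{a \in A_i}\eta(a)$. The key computational lemma is that, up to an additive constant depending only on $\eta$ and not on the triangulation, this quantity equals a fixed multiple of the volume trapped under the graph of $g_\eta$ (the piecewise-linear function from the secondary-polytope construction). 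I would show that $\langle \eta, \varphi_{\mathcal{T}}\rangle$ is minimized exactly when $\mathcal{T}$ is the triangulation $\mathcal{T}_\eta$ induced by $\eta$, because $g_\eta$ is the upper boundary (the pointwise maximum) of the concave function, so any other triangulation gives a graph lying weakly below, hence a strictly larger value of the pairing unless the triangulations agree.

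Having matched vertices of $\Sigma(A)$ with maximal cones (chambers) of the secondary fan, I would extend to arbitrary faces by a limiting/degeneration argument: a general coherent subdivision $\mathcal{S}$ is a common coarsening of the triangulations refining it, and the cone $C_{\mathcal{S}}$ is exactly the intersection of the closures of the chambers $C_{\mathcal{T}}$ for triangulations $\mathcal{T} \preceq \mathcal{S}$. Correspondingly, the face of $\Sigma(A)$ on which such $\eta$ is minimized is the convex hull of the vertices $\{\varphi_{\mathcal{T}} : \mathcal{T} \preceq \mathcal{S}\}$. Matching these two descriptions shows $\sigma_F = C_{\mathcal{S}}$ for every face, so the normal fan of $\Sigma(A)$ coincides with $\mathcal{F}_{Sec A}$ cone by cone.

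Finally, the face-lattice isomorphism follows formally: the faces of any polytope are in inclusion-reversing bijection with the cones of its normal fan, and by Proposition~\ref{prop:secondaryfan} the order relation on cones $C_{\mathcal{S}}$ (face containment) corresponds to refinement of subdivisions. Composing these bijections yields the desired isomorphism between the face lattice of $\Sigma(A)$ and the poset of coherent subdivisions ordered by refinement. I expect the main obstacle to be the central computation that $\langle\eta, \varphi_{\mathcal{T}}\rangle$ is governed by the volume under $g_\eta$ and is minimized precisely at $\mathcal{T}_\eta$; this requires care in tracking the normalized volumes and in justifying that the combinatorial minimization of a linear functional over the finite vertex set agrees with the geometric maximization defining $g_\eta$. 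The passage from chambers to general faces is comparatively routine once the chamber case and Proposition~\ref{prop:secondaryfan} are in hand.
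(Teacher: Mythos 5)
The paper offers no proof of this statement---it is imported verbatim from \cite{gkz}---and your outline is a faithful sketch of the standard argument given in that reference: the pairing $\langle \eta, \varphi_{\mathcal{T}}\rangle$ is, up to a normalization constant, the integral over $Q$ of the piecewise-affine interpolant $g_{\eta,\mathcal{T}}$ determined by $\mathcal{T}$ and the values $-\eta(a)$, and concavity of $g_\eta$ forces $g_{\eta,\mathcal{T}} \leq g_\eta$ pointwise with equality of integrals exactly when $\mathcal{T}$ refines $\mathcal{S}_\eta$, which identifies the chambers and then, by the limiting argument and Proposition~\ref{prop:secondaryfan}, all cones. The only slip is verbal: you say the pairing equals a multiple of the volume under the graph of $g_\eta$, whereas it is the volume under the $\mathcal{T}$-dependent graph of $g_{\eta,\mathcal{T}}$ (the two coincide only when $\mathcal{T}$ refines $\mathcal{S}_\eta$); your subsequent sentence shows you intend the correct comparison, so this is a wording issue rather than a gap.
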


By this theorem, one can identify the vertices of $\Sigma (A)$ with coherent triangulations of $(Q,A)$. This is illustrated for our two examples in Figure~\ref{fig:example1d2d}.

\subsection{Tropical hypersurfaces}
Another fruitful way of working with point configurations and polytopes is by considering the tropical geometry associated to $A$ and $\eta$ as discussed in \cite{macstur,britmish,gross}. This dual construction is sometimes referred to as the discrete Legendre transform of the data given in the previous section. Part of this is achieved by considering the \textbf{tropical polynomial} $f_\eta : V^* \to \mathbb{R}$ induced by $\eta$ which we write as 
\begin{align}
    f_{\eta} (u) := \min \{ u (a) + \eta (a) : a \in A \}.
\end{align}
This piecewise linear concave function induces a polyhedral decomposition $\mathcal{P}_\eta = \{ P_j \}$ of $V^*$ whose maximal cells are the maximal domains of linearity of $f_\eta$. For each such cell $P_j \in \mathcal{P}_\eta$ we define the marking of $P_j$ as 
\begin{align}
    \psi_\eta^A (P_j) := \{ a \in A : f_{\eta} (u) = u (a) + \eta (a) \textnormal{ for all } u \in P_j \}.
\end{align}
We codify these structures into a single definition.
\begin{definition}
Suppose $\mathcal{P}$ is a polyhedral subdivision of $V^*$, $f: V^* \to \mathbb{R}$ a piecewise affine function with corner locus on the codimension $1$ strata of $\mathcal{P}$ and $\psi : \mathcal{P} \to 2^A$ a marking of the cells of $\mathcal{P}$ by subsets of $A$. We call the data $(\mathcal{P} , f, \psi)$ a \textbf{tropical $A$-complex} if there exists some $\eta \in \mathbb{R}^A$ so that $(\mathcal{P} , f, \psi) = (\mathcal{P}_\eta , f_\eta, \psi^A_\eta)$.
\end{definition}
The following result establishes the link between coherent subdivisions and tropical $A$-complexes. 

\begin{proposition} \label{prop:duality}
For a given $\eta \in \mathbb{R}^A$, the face lattice of $\mathcal{S}_\eta$ is dual to the face lattice of $\mathcal{P}_\eta$. Furthermore, 
\begin{enumerate}
    \item the dual to a maximal marked polytope $Q_i$ of $\mathcal{S}_\eta$ which is the projection of the facet $F_i$ of $Q_\eta$ is the linear support function $g_{\eta, i}$, 
    \item the value of $f_\eta$ at $g_{\eta, i}$ is $c_{\eta, i}$,
    \item a face $(Q_i, A_i)$ of $\mathcal{S}_\eta$ is on the boundary of $Q$ if and only if the dual face $P_j$ is non-compact. 
\end{enumerate}
\end{proposition}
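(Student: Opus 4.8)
The plan is to recognize this proposition as the combinatorial shadow of Fenchel--Legendre duality between the concave piecewise linear function $g_\eta$ on $Q$ and the tropical polynomial $f_\eta$ on $V^*$. The first step is the identity $f_\eta = g_\eta^\vee$, where $g_\eta^\vee(u) := \min_{v \in Q}\big(u(v) - g_\eta(v)\big)$ is the concave conjugate. This follows by a direct calculation: writing $g_\eta(v)$ as the optimal value of the linear program $\max\{\sum_a \lambda_a(-\eta(a)) : \sum_a \lambda_a a = v,\ \sum_a \lambda_a = 1,\ \lambda \ge 0\}$ and substituting into the definition of $g_\eta^\vee$, the minimization over $v$ and $\lambda$ collapses to $\min_a (u(a) + \eta(a)) = f_\eta(u)$. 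Because $g_\eta$ is concave and piecewise linear, the biconjugate recovers it, $g_\eta^{\vee\vee} = g_\eta$, so the relationship between the two functions — and hence between $\mathcal{S}_\eta$ and $\mathcal{P}_\eta$ — is symmetric. This reframes $\mathcal{P}_\eta$ as exactly the decomposition of $V^*$ into the domains of linearity of the conjugate of $g_\eta$.

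The second, and main, step is to produce the order-reversing bijection on face lattices. For a face $(Q_i, A_i)$ of $\mathcal{S}_\eta$ I would set
\[ \Phi(Q_i) := \{ u \in V^* : u(v) - g_\eta(v) = f_\eta(u)\ \text{for all } v \in Q_i \}, \]
the set of subgradients of $g_\eta$ that are constant along all of $Q_i$; equivalently, the set of $u$ at which the minimum defining $g_\eta^\vee$ is attained uniformly over $Q_i$. I would then verify three things: that $\Phi(Q_i)$ is a cell of $\mathcal{P}_\eta$ with marking $A_i$, since the active linear pieces of $f_\eta$ on it are indexed by $A_i$; that $\Phi$ reverses inclusion, because subgradients common to a smaller face $Q_{i'} \subseteq Q_i$ form a larger polyhedron; and that dimensions are complementary, $\dim Q_i + \dim \Phi(Q_i) = n$. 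The dimension count, together with surjectivity of $\Phi$, is where the real work lies: one must show every cell of $\mathcal{P}_\eta$ arises this way and that no dimension is lost, which is precisely the content of the contact-locus decomposition of $Q \times V^*$ into products of dual faces under the inequality $u(v) - g_\eta(v) - f_\eta(u) \le 0$. I expect this to be the principal obstacle, and the place where the piecewise linear structure — finitely many facets $F_i$, each with its own $g_{\eta,i}$ — must be used to guarantee that the correspondence is a genuine lattice isomorphism rather than merely an order-preserving map.

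With the bijection in hand, the three numbered assertions follow by specialization. For a maximal face $Q_i$ (dimension $n$), the function $g_\eta$ has a single gradient $g_{\eta,i} \in V^*$ along $Q_i$, so $\Phi(Q_i) = \{g_{\eta,i}\}$ is a vertex of $\mathcal{P}_\eta$; this is (1). For (2), I would evaluate the supporting hyperplane $r = g_{\eta,i}(v) - c_{\eta,i}$ of the facet $F_i$ on each generator $(a, -\eta(a))$ of $Q_\eta$: the inequality $-\eta(a) \le g_{\eta,i}(a) - c_{\eta,i}$ rearranges to $g_{\eta,i}(a) + \eta(a) \ge c_{\eta,i}$, with equality exactly for $a \in A_i$, whence $f_\eta(g_{\eta,i}) = \min_a\big(g_{\eta,i}(a)+\eta(a)\big) = c_{\eta,i}$. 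Finally, for (3) I would compute the recession cone of the cell $P_j = \Phi(Q_i)$, namely $\mathrm{rec}(P_j) = \{ w \in V^* : w(a) \le w(b)\ \text{for all } a \in A_i,\ b \in A \}$, and identify it with the inner normal cone of $Q$ along the smallest face of $Q$ containing $Q_i$. This cone is nontrivial — so $P_j$ is non-compact — if and only if some nonzero $w$ attains its minimum over $A$ on all of $A_i$, which holds exactly when $\convh(A_i)$, hence $Q_i$, lies in a proper face of $Q$, i.e.\ on $\partial Q$. The converse inclusion (bounded cell corresponds to interior face) is the contrapositive of the same computation.
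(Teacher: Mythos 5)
The paper omits the proof entirely, asserting it is ``immediate from the constructions and can be deciphered by considering the duality of the normal fan of $Q_\eta$''; your concave-conjugacy argument $f_\eta = g_\eta^\vee$ is precisely that duality made explicit, and the three specializations (the vertex $\{g_{\eta,i}\}$ dual to a maximal cell, the evaluation $f_\eta(g_{\eta,i}) = c_{\eta,i}$, and the recession-cone criterion for non-compactness) all check out. This is essentially the same approach, carried out correctly and in more detail than the paper provides.
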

We omit the proof of this proposition, as it is immediate from the constructions and can be deciphered by considering the duality of the normal fan of $Q_\eta$. 

We write this correspondence as 
\begin{equation} \label{eq:dualiso}
	\begin{tikzpicture}[baseline=(current  bounding  box.center), scale=1.5]
	\node (A) at (0,0) {$\mathcal{S}_\eta$};
	\node (B) at (1,0) {$\mathcal{P}_\eta$};
	\path[->,font=\scriptsize]
	([yshift=-.7mm] A.north east) edge node[above] {$\phi^A$} ([yshift=-.7mm] B.north west);
	\path[->, font=\scriptsize, yshift=-.5em]
	(B) edge node[below] {$\psi^A$} (A);
	\end{tikzpicture} 
\end{equation}
where 
\begin{align}
    \phi^A (Q_i, A_i) & = \{u : f_\eta (u) = u (a_i) + \eta (a_i) \textnormal{ if } a_i \in A_i \}, 
\end{align}
and $\psi^A (P) = (Q_i, A_i)$ where $Q_i$ is the convex hull of $A_i$ and 
\begin{align}
    A_i & = \{a_i : f_\eta (u) = u (a_i) + \eta (a_i) \textnormal{ for all } u \in P \}.
\end{align}
On occasion, we may write $\psi^A (P) = A_i$ and omit reference to the convex hull.

If $\eta, \eta^\prime$ induce the same coherent subdivision $\mathcal{S}_\eta = \mathcal{S} = \mathcal{S}_{\eta^\prime}$, we say $\mathcal{P}_\eta$ is \textbf{isotopic} to $\mathcal{P}_{\eta^\prime}$. Indeed, one may connect $\eta$ to $\eta^\prime$ via a straight line curve in $C_{\mathcal{S}}^\circ$ and observe a topological isotopy of the tropical duals. Using Proposition~\ref{prop:duality}, if $\mathcal{P}$ is isotopic to $\mathcal{P}^\prime$, there is a unique isomorphism of face lattices
\begin{align} \label{eq:isotopy}
    \iota_{\mathcal{P}, \mathcal{P}^\prime} : \mathcal{P} \to \mathcal{P}^\prime
\end{align}
which takes the maximal dual to $a \in A$ in $\mathcal{P}$ to the maximal dual of $a$ in $\mathcal{P}^\prime$ (when $\{a\}$ is a face of the dual subdivision).
\begin{figure}[t]
\begin{tikzpicture}[cross line/.style={preaction={draw=white, -, line width=6pt}}]
    \node at (1,0) (node A) {};
    \node at (4,0) (node B){};
    \draw[thick]  (node A) -- (node B); 
    \node[inner sep=0] (image) at (-4,0) {\includegraphics[scale=.1]{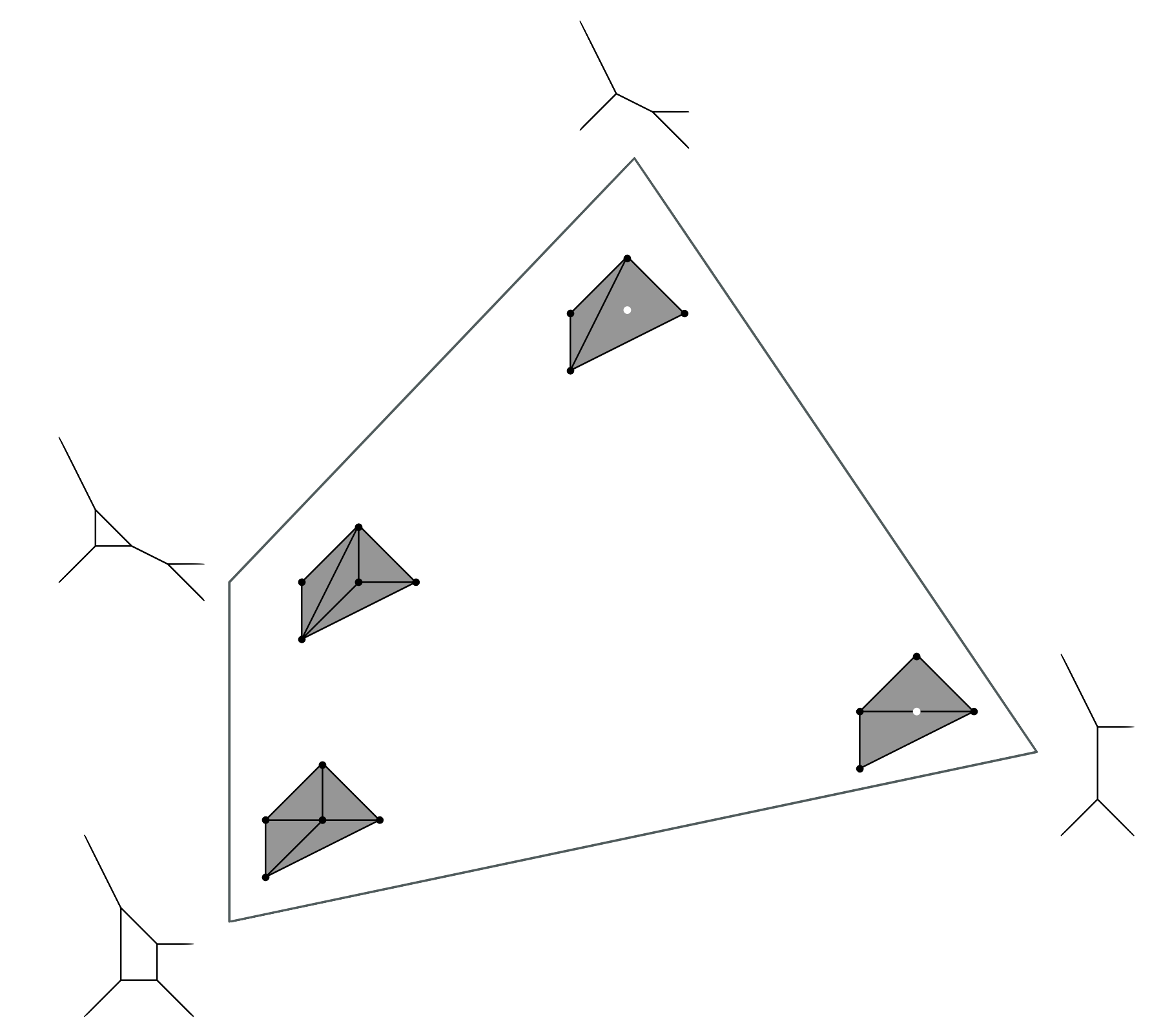}};
    \node[inner sep=0] (image) at (.75,-1) {\includegraphics[scale=.1]{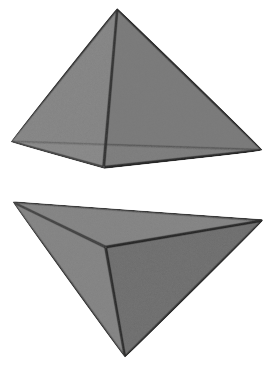}};
    \node[inner sep=0] (image) at (4.25,-1) {\includegraphics[scale=.1]{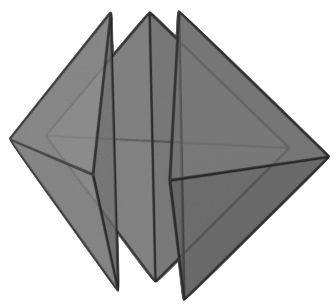}};
    \node[inner sep=0] (image) at (.75,1) {\includegraphics[scale=.1]{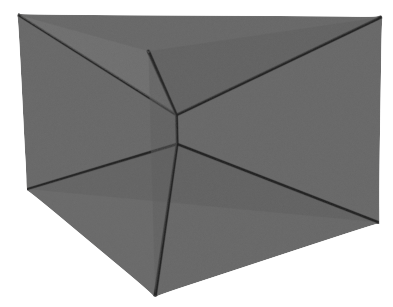}};
    \node[inner sep=0] (image) at (4.25,1) {\includegraphics[scale=.1]{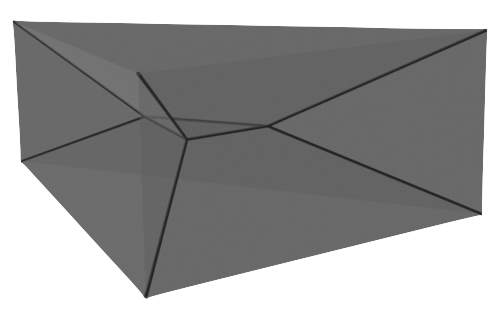}};
\end{tikzpicture}
\caption{\label{fig:example1d2d} The secondary polytopes of $(Q,A)$ and $(\tilde{Q}, \tilde{A})$ along with their corresponding coherent subdivisions and tropical hypersurfaces.}
\end{figure}

Note that the partial order $\preceq$ of refining a subdivision induces a partial order on isotopy classes of tropical $A$-complexes. 
\begin{proposition}
Let $\subdiv{A}$ be the poset of coherent subdivisions of $A$ and $\tropical{A}$ the poset of tropical $A$-complexes modulo isotopy, then $\mathcal{S}_\eta \mapsto \mathcal{P}_\eta$ induces a one-to-one correspondence 
\begin{equation} \label{eq:dualcorrespondence}
	\begin{tikzpicture}[baseline=(current  bounding  box.center), scale=2]
	\node (A) at (0,0) {$\subdiv{A}$};
	\node (B) at (1,0) {$\tropical{A}$};
	\path[->,font=\scriptsize]
	([yshift=-.7mm] A.north east) edge node[above] {$\Phi$} ([yshift=-.7mm] B.north west);
	\path[->, font=\scriptsize, yshift=-.5em]
	(B) edge node[below] {$\Psi$} (A);
	\end{tikzpicture} 
\end{equation}
between coherent subdivisions and isotopy classes of polyhedral $A$-complexes.
\end{proposition}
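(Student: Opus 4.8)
The plan is to view both posets as quotients of $\mathbb{R}^A$: the poset $\subdiv{A}$ is the quotient by the relation $\eta \sim \eta'$ when $\mathcal{S}_\eta = \mathcal{S}_{\eta'}$, and $\tropical{A}$ is the quotient by isotopy, which by definition is exactly this same relation transported to the tropical side. I would define $\Phi$ by sending a coherent subdivision $\mathcal{S}$ to the isotopy class of $(\mathcal{P}_\eta, f_\eta, \psi^A_\eta)$ for any $\eta \in C^\circ_{\mathcal{S}}$ — such $\eta$ exists precisely because $\mathcal{S}$ is coherent — and $\Psi$ by sending the isotopy class of a tropical $A$-complex $(\mathcal{P}, f, \psi)$ to $\mathcal{S}_\eta$, where $\eta$ is any function with $(\mathcal{P}, f, \psi) = (\mathcal{P}_\eta, f_\eta, \psi^A_\eta)$, guaranteed to exist by the definition of a tropical $A$-complex.

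The heart of the argument is that these assignments are well defined on the quotients, and here I would invoke Proposition~\ref{prop:duality}. For $\Phi$, if $\eta, \eta' \in C^\circ_{\mathcal{S}}$ then $\mathcal{S}_\eta = \mathcal{S} = \mathcal{S}_{\eta'}$, so $\mathcal{P}_\eta$ and $\mathcal{P}_{\eta'}$ are isotopic by definition, and the chosen $\eta$ does not matter. For $\Psi$ the point is that the subdivision attached to a tropical $A$-complex is recovered from its combinatorial data alone: the map $\psi^A$ of \eqref{eq:dualiso} reconstructs each marked polytope as $\psi^A(P) = (Q_i, A_i)$, and hence the whole of $\mathcal{S}_\eta$, from the face lattice of $\mathcal{P}$ together with its marking $\psi$. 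Since isotopic complexes are related by the canonical marking-preserving face-lattice isomorphism $\iota_{\mathcal{P}, \mathcal{P}'}$ of \eqref{eq:isotopy}, they return the same subdivision, so $\Psi$ descends to isotopy classes.

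Granting well-definedness, $\Phi$ and $\Psi$ are mutually inverse: $\Psi(\Phi(\mathcal{S})) = \mathcal{S}_\eta = \mathcal{S}$ for $\eta \in C^\circ_{\mathcal{S}}$, while $\Phi(\Psi([\mathcal{P}_\eta]))$ is the class of $\mathcal{P}_{\eta'}$ for some $\eta' \in C^\circ_{\mathcal{S}_\eta}$, which equals $[\mathcal{P}_\eta]$ because $\eta$ itself lies in $C^\circ_{\mathcal{S}_\eta}$. Compatibility with the partial orders is then immediate, since the order on $\tropical{A}$ was defined by transporting refinement along this very correspondence; it matches the intrinsic description coming from Proposition~\ref{prop:secondaryfan}, whereby $[\mathcal{P}_{\eta'}] \preceq [\mathcal{P}_\eta]$ exactly when $\mathcal{S}_\eta$ refines $\mathcal{S}_{\eta'}$, equivalently when $C_{\mathcal{S}_{\eta'}}$ is a face of $C_{\mathcal{S}_\eta}$. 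I expect the only genuine obstacle to be the well-definedness of $\Psi$: a single isotopy class contains many distinct functions $f_\eta$ and geometrically distinct decompositions $\mathcal{P}_\eta$, and one must verify that the reconstructed subdivision is blind to these metric choices, depending only on the marked face lattice that $\iota_{\mathcal{P}, \mathcal{P}'}$ preserves.
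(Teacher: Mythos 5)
Your proposal is correct and follows essentially the same route as the paper, whose entire proof is the observation that the marking $\psi^A$ of the cells of $\mathcal{P}$ reconstructs the coherent subdivision dual to $(\mathcal{P}, f, \psi)$ — precisely your well-definedness argument for $\Psi$. The rest of your write-up (the mutual inverses, the order compatibility) is a harmless elaboration of details the paper leaves implicit, made easier by the fact that isotopy is \emph{defined} in the paper as inducing the same coherent subdivision.
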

The proof of this proposition is evident from the fact that the marking $\psi^A$ of cells of $\mathcal{P}$ produces the coherent subdivision dual to $(\mathcal{P}, f, \psi)$. 

\begin{definition}
    The tropical hypersurface $\trop_\eta$ associated to $(A,\eta)$ is the codimension $1$ subcomplex of $\mathcal{P}_\eta$. 
\end{definition}

The secondary polytope thus can be seen as a dictionary whose face lattice encodes both coherent subdivisions of a polytope and, equivalently, their tropical duals. This is illustrated for our  examples in Figure~\ref{fig:example1d2d}. A well known example of this circle of ideas is the case when $(Q,A)$ is an $n$-gon in the plane $V \cong \mathbb{R}^2$ which will be discussed in Section~\ref{sec:multiplihedra}. 

\begin{figure}[t]
\begin{tikzpicture}[cross line/.style={preaction={draw=white, -, line width=6pt}}]
     \node[anchor=south west,inner sep=0] (image) at (0,0) {\includegraphics[scale=.25]{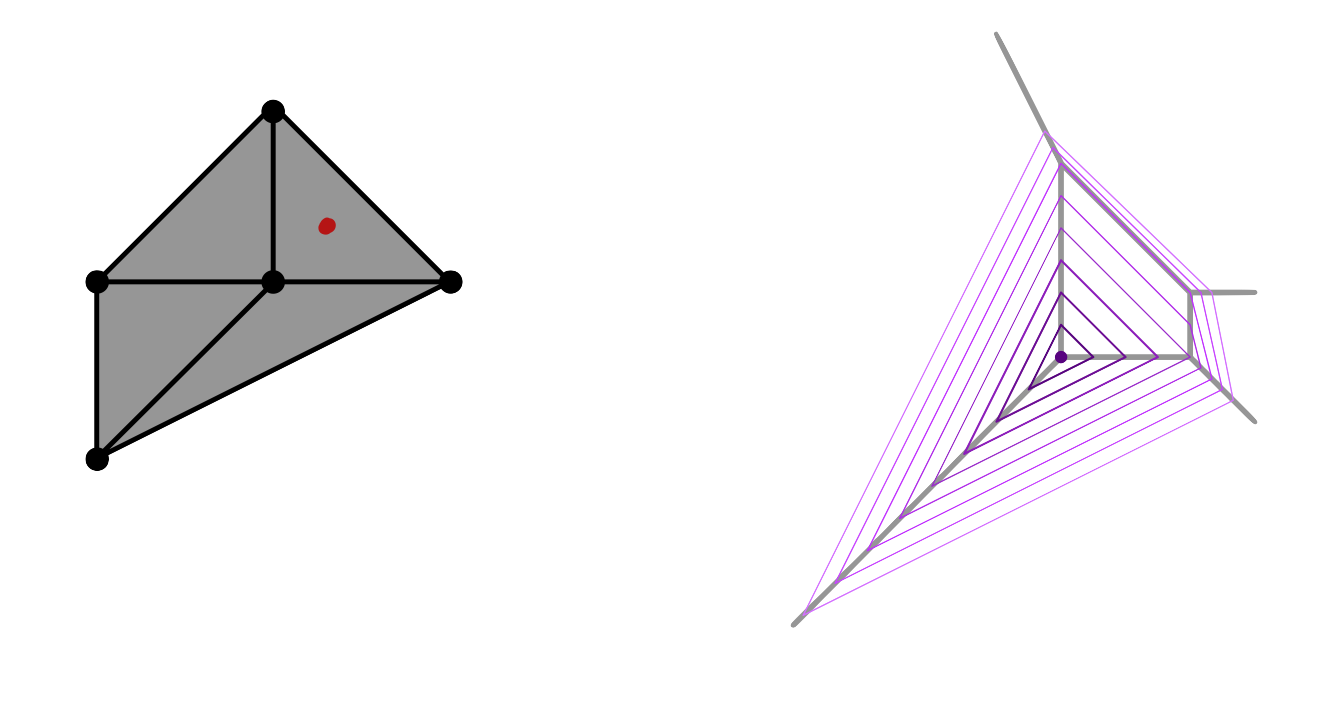}};
	%\node at (5.7,3) {\small $g$};
	%\node at (6.3,4.35) {\tiny $y = \infty$};
	%\node at (6.3,0.65) {\tiny $y = -\infty$};
	%\node at (9.3,1.2) {\small $H = x$};
\end{tikzpicture}
\caption{\label{fig:example1e} A contour graph of $f_\eta - \alpha$.}
\end{figure}

\section{Painted tropical complexes}
For what follows, we will fix an element $\alpha$ in $V$. 
\begin{definition}
A \textbf{painted tropical $A$-complex} $(\mathcal{P}, f, \psi, \kappa)$ relative to $\alpha \in V$ is a tropical $A$-complex $(\mathcal{P}, f, \psi)$ along with a color function
\begin{align}
    \kappa : \mathcal{P} \to \{r, p, b\}
\end{align} 
for which there exists an $\eta \in \mathbb{R}^A$ and $c \in \mathbb{R}$ so that 
\begin{enumerate}
    \item $(\mathcal{P}, f, \psi) = (\mathcal{P}_\eta, f_\eta, \psi^A)$,
    \item the color function $\kappa$ equals $\kappa_{\eta, c}$ is defined by 
    \begin{align*}
    \kappa_{\eta, c} (\sigma) = \begin{cases} b & f_\eta (u) <  u (\alpha ) + c \textnormal{ for all } u \in \relint (\sigma ), \\ p & f_\eta (u) =  u (\alpha ) + c \textnormal{ for some } u \in \relint (\sigma ), \\ r & f_\eta (u) >  u (\alpha ) + c \textnormal{ for all } u \in \relint (\sigma ). \end{cases}
\end{align*}
\end{enumerate}
\end{definition}
For a painted tropical $A$-complex $(\mathcal{P}, f, \psi, \kappa)$ , we call cells $\sigma$ with $\kappa (\sigma ) = b$ (respectively $p$, $r$) \textbf{blue} (respectively \textbf{purple}, \textbf{red}) cells. We say that the painted tropical $A$-complexes $(\mathcal{P}_1, f_1, \psi_1, \kappa_1)$ and $(\mathcal{P}_2, f_2, \psi_2, \kappa_2)$ are isotopic if the tropical $A$-complexes are isotopic and the isotopy preserves the color functions.
%-------------------------------------------------------------
\begin{example} \label{ex:qwi2}
Take $(Q, A)$ to be the case of the quadrilateral with an interior point given in Example~\ref{ex:qwi0}. To obtain a painted tropical $A$-complex relative to $\alpha$, we must choose an $\eta, c$ and $\alpha$. The choice of $\eta$ will simultaneously give a subdivision of $(Q,A)$ and a tropical $A$-complex. On the left of  Figure~\ref{fig:example1e} we illustrate the triangulation associated to $\eta = -e_{(0,0)}$.  Choosing $\alpha = (1/3, 1/3)$, the right side of the figure describes the contour graph of $f_\eta - \alpha$ with the tropical hypersurface in the background. In Figure~\ref{fig:example1f}, the painted tropical $A$-complexes are shown for increasing values of $c$ with $\eta$ and $\alpha$ fixed.
\begin{figure}[ht]
\begin{tikzpicture}[cross line/.style={preaction={draw=white, -, line width=6pt}}]
     \node[anchor=south west,inner sep=0] (image) at (0,0) {\includegraphics[scale=.23]{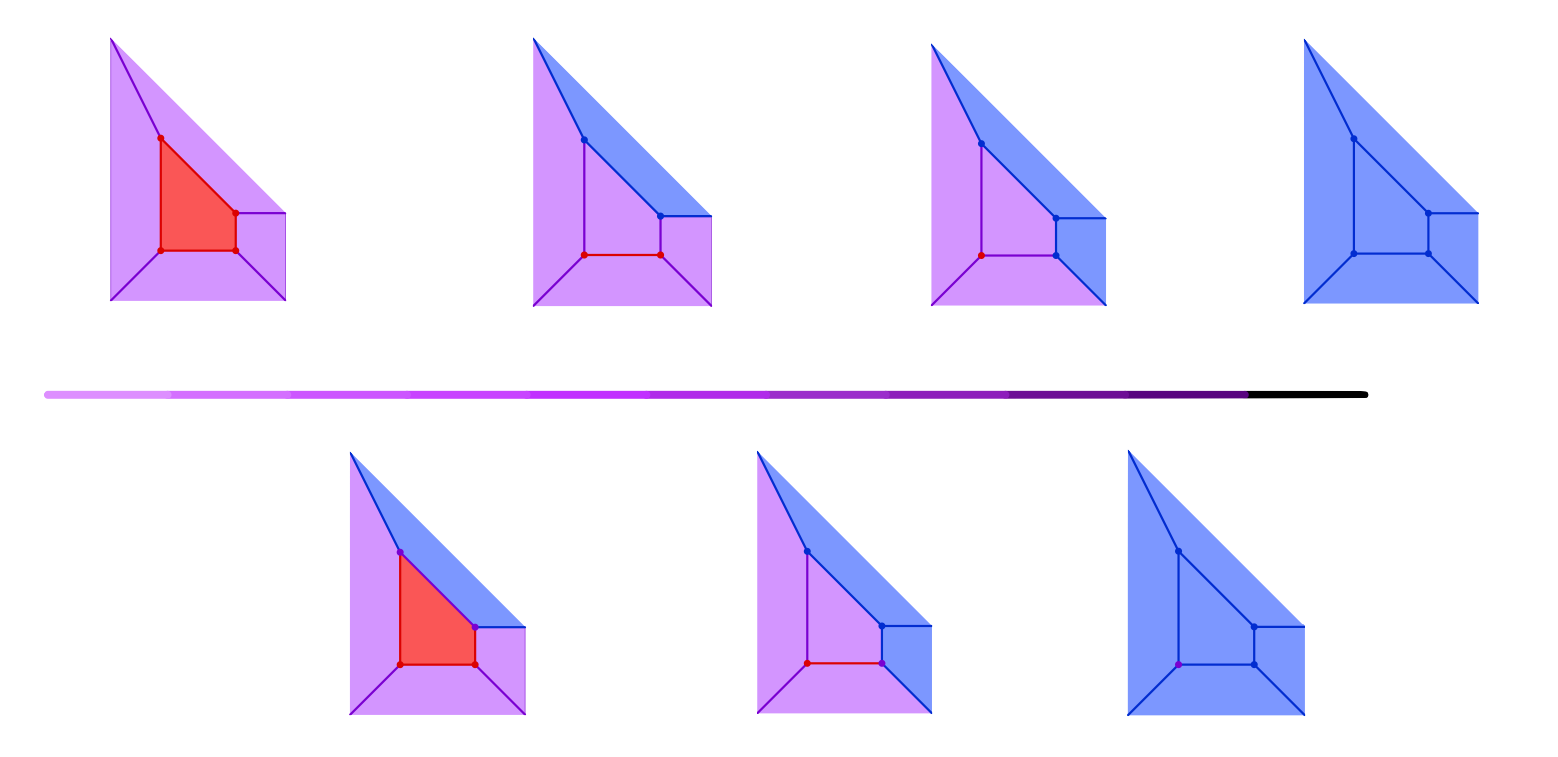}};
	%\node at (5.7,3) {\small $g$};
	%\node at (6.3,4.35) {\tiny $y = \infty$};
	%\node at (6.3,0.65) {\tiny $y = -\infty$};
	%\node at (9.3,1.2) {\small $H = x$};
\end{tikzpicture}
\caption{\label{fig:example1f} Painted tropical $A$-complexes.}
\end{figure}
\end{example}

By Proposition~\ref{prop:duality}, the non-compact cells of a tropical $A$-complex $\mathcal{P}_\eta$ are dual to faces of $Q$. In fact, the coloring of these cells has certain restrictions based on the position of $\alpha$. To explain this, we introduce a bit of notation.  Let $\{\nu_1, \ldots, \nu_m\} \subset V^*$ be the supporting functions of the facets of $Q$ so that
\begin{align} \label{eq:support}
Q = \bigcap_{i = 1}^m \left\{x : \nu_i (x) \geq d_i \right\}.
\end{align}
Write $F_i$ for the facet $\{x \in Q : \nu_i (x) = d_i\}$. The distinguished element $\alpha$ defines a sign vector $\mathbf{s}_{A, \alpha} = (s_1, \ldots, s_m) \in \{-, 0, +\}^m$ determined by taking
\begin{align} \label{eq:signvec}
    s_i = \begin{cases} + & \nu_i (\alpha ) < d_i, \\ 0 & \nu_i (\alpha ) = d_i, \\ - & \nu_i (\alpha ) > d_i. \end{cases}
\end{align}

\begin{proposition} \label{prop:rays}
Let $\sigma_i$ be the non-compact $1$ dimensional cell of a tropical $A$-complex whose dual facet is $F_i = \psi (\sigma_i)$. Then, outside of any sufficiently large bounded subset $K$ of $\sigma_i$, 
\begin{enumerate}[topsep=3pt,itemsep=1ex]
\item $f_\eta (u) > u (\alpha ) + c$ for all $u \in \sigma_i - K$ if $s_i = +$,
\item $f_\eta (u) < u (\alpha ) + c$ for all $u \in \sigma_i - K$ if $s_i = -$,
\end{enumerate}
If $s_i = 0$ then $u (\alpha ) + c - f_\eta (u)$ is constant on $\sigma$.
\end{proposition}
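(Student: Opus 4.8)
The plan is to analyze the asymptotic behavior of the function $h(u) := f_\eta(u) - u(\alpha) - c$ along the non-compact ray $\sigma_i$. The key geometric input is Proposition~\ref{prop:duality}(3): since $\sigma_i$ is non-compact and one-dimensional, its dual face $F_i$ is a facet of $Q$, and the marking $\psi(\sigma_i)$ consists of exactly those $a \in A$ lying on $F_i$. On the maximal cells adjacent to $\sigma_i$, the function $f_\eta$ agrees with the affine functions $u \mapsto u(a) + \eta(a)$ for $a \in \psi(\sigma_i) \subset F_i$. The unbounded direction of $\sigma_i$ is, by the normal-fan duality, precisely an inner normal direction to the facet $F_i$, i.e. a positive multiple of $\nu_i$; this is the crucial fact linking the ray's direction to the supporting data of Equation~\eqref{eq:support}.

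First I would fix a point $u_0 \in \sigma_i$ and a unit direction vector $w$ pointing along the unbounded part of $\sigma_i$, so that points far out on the ray are $u_0 + tw$ for large $t > 0$. Since $f_\eta$ is linear along $\sigma_i$ outside a bounded set and equals $u(a) + \eta(a)$ there for any fixed $a \in \psi(\sigma_i)$, the derivative of $h$ in the $w$-direction is
\begin{align*}
\frac{d}{dt}\, h(u_0 + tw) = w(a) - w(\alpha) = (a - \alpha)(w),
\end{align*}
where I am using the identification of $w \in V^*$ with a linear functional and $a - \alpha \in V$ as its argument. The plan is then to show this derivative has the same sign as $s_i$. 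The direction $w$ is an inner normal to $F_i$, so $w = \lambda \nu_i$ for some $\lambda > 0$; since $a \in F_i$ we have $\nu_i(a) = d_i$, hence $w(a) = \lambda d_i$, while $w(\alpha) = \lambda \nu_i(\alpha)$. Therefore the derivative equals $\lambda\bigl(d_i - \nu_i(\alpha)\bigr)$, which is positive exactly when $\nu_i(\alpha) < d_i$ (case $s_i = +$), negative when $\nu_i(\alpha) > d_i$ (case $s_i = -$), and zero when $\nu_i(\alpha) = d_i$ (case $s_i = 0$).

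From the sign of the derivative the conclusion follows by elementary calculus: in the $s_i = +$ case $h$ is eventually strictly increasing and unbounded above along the ray, so $h(u) > 0$, i.e. $f_\eta(u) > u(\alpha) + c$, for all $u$ past some bounded set $K$; the $s_i = -$ case is symmetric. In the $s_i = 0$ case the derivative of $h$ vanishes identically along $\sigma_i$, so $h = f_\eta(u) - u(\alpha) - c$ is constant on $\sigma_i$, as claimed.

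The main obstacle I anticipate is justifying rigorously that the unbounded direction $w$ of $\sigma_i$ is a positive multiple of the inner facet normal $\nu_i$. This requires unwinding the duality between $\mathcal{P}_\eta$ and the subdivision $\mathcal{S}_\eta$ at the level of recession cones: the recession cone of the non-compact cell $\sigma_i$ is the normal cone to the face $F_i$ inside the normal fan of $Q$, and for a facet this normal cone is the ray spanned by $\nu_i$. Once this identification is secured, the remainder is the routine sign computation above. A secondary subtlety is ensuring that $f_\eta$ is genuinely affine along all of $\sigma_i$ outside a bounded set rather than merely piecewise affine; but since $\sigma_i$ is a single one-dimensional cell of $\mathcal{P}_\eta$, $f_\eta$ restricts to one affine function on it by definition, so this holds throughout $\sigma_i$ and the choice of $K$ in the statement only serves to absorb any compact initial segment.
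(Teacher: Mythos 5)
Your proposal is correct and follows essentially the same route as the paper: both identify the unbounded direction of $\sigma_i$ with the facet normal $\nu_i$ via the recession-fan/normal-fan duality, reduce $f_\eta$ on $\sigma_i$ to the single affine function $u \mapsto u(a) + \eta(a)$ for $a \in \psi(\sigma_i)$, and observe that the growth rate of $f_\eta(u) - u(\alpha) - c$ along the ray is a positive multiple of $d_i - \nu_i(\alpha)$, whose sign is $s_i$. The paper carries this out by explicitly parameterizing $u = u_0 + r\nu_i$ and exhibiting the threshold $R$, which is just the integrated form of your derivative computation.
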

\begin{proof}
To see this, note first that $\sigma_i = u_0 + \mathbb{R}_{\geq 0} \cdot \nu_i$ for some $u_0 \in V^*$. This follows from an elementary argument, or the fact that the recession fan of the tropical $A$-complex is the normal fan of $Q$. 

Now, let $u \in \sigma_i$ be any element and write it as $u_0 + r \nu_i$ for some $r \geq 0$.  Because $\psi (\sigma_i ) = F_i$ we must have that, for any $a \in F_i$,
\[
f_\eta (u) = u(a) + \eta (a) = u_0 (a) + r d_i + \eta (a).
\]
On the other hand,  
\[
u (\alpha ) = (u_0 + r \nu_i ) (\alpha ) = u_0 (\alpha ) + r \nu_i (\alpha ).
\]
If $a_i = 0$ so that $\nu_i (\alpha ) = d_i$, then subtracting gives $f_\eta (u) - u(\alpha)$ is constant on $\sigma_i$. On the other hand, if $a_i \ne 0$ then $\nu_i (a) \ne d_i$ so we may define
\[
R = \frac{u_0 (a) + \eta (a) - c - u_0 (\alpha ) }{\nu_i (\alpha ) - d_i}.
\]
If $a_i = +$, then for $r > R$ we have 
\[
(d_i - \nu_i (\alpha )) r > c + u_0 (\alpha ) - u_0 (a) - \eta (a)  
\]
and so
\begin{align*}
 f_\eta (u) - u(\alpha ) - c & = u_0 (a)  + r d_i + \eta (a) -  u_0 (\alpha ) - r \nu_i (\alpha ) - c, \\ & = u_0 (a)   + \eta (a) -  u_0 (\alpha ) - c +   (d_i - \nu_i (\alpha ))r, \\ & > 0 ,
\end{align*}
The opposite inequality for $a_i = -$ is found in a similar manner.
\end{proof}

\begin{proposition} \label{prop:zerosimplices}
A painted tropical $A$-complex relative to $\alpha \in V^*$ is determined by the tropical $A$-complex $\mathcal{P}$ and painting the $0$-cells.
\end{proposition}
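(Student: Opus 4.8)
The plan is to show that, once the tropical $A$-complex $(\mathcal{P}, f_\eta, \psi^A)$ and the fixed point $\alpha$ are given, the color $\kappa(\sigma)$ of every cell $\sigma$ is a function only of the colors of the $0$-cells which are faces of $\sigma$. Write $h(u) = f_\eta(u) - u(\alpha) - c$, so that $\kappa = \kappa_{\eta,c}$ records the sign of $h$ on relative interiors: $\kappa(\sigma)$ is $b$, $p$, or $r$ according as $h < 0$ everywhere, $h = 0$ somewhere, or $h > 0$ everywhere on $\relint(\sigma)$. The structural fact driving everything is that $h$ is affine on each closed cell $\sigma$, since $f_\eta$ restricts there to a single affine function $u \mapsto u(a) + \eta(a)$ (for $a \in \psi^A(\sigma)$) and $u \mapsto u(\alpha)$ is linear. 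In particular, a $0$-cell $v$ has $\relint(v) = \{v\}$, so its color records exactly the sign of $h(v)$; thus the hypothesis amounts to knowing the sign of $h$ at every vertex of $\mathcal{P}$.

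First I would treat compact cells. Writing $\sigma = \convh\{v_1,\dots,v_k\}$ and letting $m = \min_j h(v_j)$ and $M = \max_j h(v_j)$, affineness gives $h(\relint\sigma) = (m,M)$ when the $h(v_j)$ are not all equal and $\{m\}$ otherwise. Reading off the three cases, $\kappa(\sigma) = p$ exactly when $\sigma$ has both a blue and a red vertex or all its vertices are purple; $\kappa(\sigma) = b$ when $\sigma$ has no red vertex but at least one blue vertex; and $\kappa(\sigma) = r$ when $\sigma$ has no blue vertex but at least one red vertex. In every case the color is determined by the colors of the $0$-cells of $\sigma$.

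For a non-compact cell $\sigma$ I would use the decomposition $\sigma = P_\sigma + C_\sigma$ into its vertex polytope $P_\sigma = \convh(\text{vertices})$ and recession cone $C_\sigma$, together with $\relint(\sigma) = \relint(P_\sigma) + \relint(C_\sigma)$; each $\sigma$ has at least one vertex because $Q$ is full-dimensional, so the recession fan of $\mathcal{P}$ (the normal fan of $Q$) consists of pointed cones. The cone $C_\sigma$ is the normal cone of $Q$ along the dual face $\psi(\sigma)$, generated by those $\nu_i$ with $F_i \supseteq \psi(\sigma)$; extending the slope computation from the proof of Proposition~\ref{prop:rays} from rays to arbitrary recession directions, the linear part $\ell$ of $h$ on $\sigma$ satisfies $\ell(\nu_i) = \nu_i(a) - \nu_i(\alpha) = d_i - \nu_i(\alpha)$ for $a \in \psi^A(\sigma) \subseteq F_i$, whose sign is precisely $s_i$. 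Consequently $h(\relint\sigma) = I + J$, where $I = h(\relint P_\sigma)$ is the interval from the compact case, determined by the vertex colors of $\sigma$, and $J = \ell(\relint C_\sigma)$ equals $\{0\}$, $(0,\infty)$, $(-\infty,0)$, or $\R$ according as the signs $\{s_i : F_i \supseteq \psi(\sigma)\}$ are all $0$, all $\geq 0$ but not all $0$, all $\leq 0$ but not all $0$, or mixed. Since both $I$ and $J$ are then fixed by the $0$-cell colors and by the sign vector $\mathbf{s}_{A,\alpha}$, the membership of $0$ in $I + J$, and hence $\kappa(\sigma)$, is determined; this yields the claim.

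The main obstacle is the non-compact case: one must verify that the recession directions of $\sigma$ are exactly the facet normals $\nu_i$ with $F_i \supseteq \psi(\sigma)$ and that the sign of $h$ along each is governed by $\mathbf{s}_{A,\alpha}$ independently of $\eta$ and $c$, which is exactly where Proposition~\ref{prop:rays}, suitably generalized, does the work. Once the recession signs are in hand, the remaining interval arithmetic for $I+J$ and the case analysis for compact cells are routine.
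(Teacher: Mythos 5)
Your proof is correct, and the core strategy is the same as the paper's: reduce everything to the sign of the affine function $h = f_\eta - \alpha^* - c$ at the vertices of each cell, plus the behaviour at infinity controlled by the sign vector $\mathbf{s}_{A,\alpha}$. The compact case is handled identically in both arguments. Where you genuinely diverge is in the non-compact case. The paper invokes Proposition~\ref{prop:rays} only for rays (one-dimensional non-compact cells, whose recession direction is a single $\nu_i$) and then disposes of higher-dimensional non-compact cells with the one-line remark that their colour ``can be determined by examining the colors of the faces that are compact or rays as in the compact case.'' You instead treat all non-compact cells uniformly via the Minkowski--Weyl decomposition $\sigma = P_\sigma + C_\sigma$, the identity $\relint(\sigma) = \relint(P_\sigma) + \relint(C_\sigma)$, and the observation that the linear part of $h$ on $\sigma$ takes the value $d_i - \nu_i(\alpha)$ (of sign $s_i$) on each generator $\nu_i$ of $C_\sigma$; the colour is then read off from whether $0$ lies in, below, or above the interval $I + J$. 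This buys you a fully explicit argument precisely where the paper is sketchiest -- the generalization of Proposition~\ref{prop:rays} from rays to arbitrary recession directions is exactly the point the paper leaves implicit -- at the cost of needing the standard facts about recession cones of dual cells and relative interiors of Minkowski sums. One small imprecision worth fixing: the interval $I$ itself is not determined by the vertex colours (only the signs of its endpoints are), so you should say that what is determined is the position of $I + J$ relative to $0$, which is all the case analysis requires; your final sentence about ``membership of $0$ in $I+J$'' should also note that, when $0 \notin I+J$, the side on which $I+J$ lies is likewise determined.
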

\begin{proof}
To see this, one need only show that, given a painted tropical $A$-complex  $(\mathcal{P}, f, \psi)$ relative to $\alpha$ with a color assignment $\kappa$, then $\kappa$ can be recovered from its image on $0$-cells $\kappa (0) : \mathcal{P} (0) \to \{r, p, b\}$.

Suppose $\eta$ and $c$ determine $\kappa$ and define the difference function $g : V^* \to \mathbb{R}$ by
\[
g (u) = f_\eta (u) - u (\alpha ) - c.
\]
Then for $\sigma$ in $\mathcal{P}$, the coloring of $\sigma$ is blue (resp. red) if $g < 0$ (resp. $g > 0$) on the relative interior of $\sigma$, and purple if $g = 0$ at some point on the relative interior.

There are two cases to consider. Assume $\sigma \in \mathcal{P}$ is a compact cell. Then it is the convex hull of its $0$-dimensional faces. As $g$ is an affine function when restricted $\sigma$, it is negative (resp. positive) on the relative interior of $\sigma$ if and only if it is negative (resp. positive) on all vertices except those lying on a single face where it may be zero. If this does not occur, then either all vertices of sigma have $g$ value $0$ implying $g$ is identically zero on $\sigma$ or there are both positive and negative $g$ values on the vertices, implying $g$ attains a zero on the relative interior of $\sigma$.

Another case to take into consideration is when $\sigma$ is non-compact. Now, if $\sigma$ is a ray, then its dual $\psi (\sigma )$ is a facet $F_i$ and by Proposition~\ref{prop:rays}, the sign of $g$ outside of a compact set is determined by $s_i$ if $s_i \ne 0$. If this sign is opposite of that of the vertex of $\sigma$, then $\sigma$ must achieve a zero. Otherwise the whole ray must have the sign $s_i$. If $s_i = 0$ then again Proposition~\ref{prop:rays} gives that $g$ is constant on $\sigma$ so that its sign is determined by the sign of $g$ on the ray's vertex.

If $\sigma$ is a non-compact cell that is not a ray, then its color can be determined by examining the colors of the faces that are compact or rays as in the compact case.
\end{proof}

Given $(\eta_1, c_1)$ and $(\eta_2, c_2)$ in $\mathbb{R}^A \times \mathbb{R}$ we say $(\eta_1, c_1) \sim (\eta_2, c_2)$ if the painted tropical $A$-complexes $(\mathcal{P}_{\eta_1}, f_{\eta_1}, \psi_{\eta_1}^A, \kappa_1)$ and $(\mathcal{P}_{\eta_2}, f_{\eta_2}, \psi_{\eta_2}^A, \kappa_2)$ are isotopic and the isotopy respects the color functions. Write 
\[
C_{\eta, c}^\circ = \left\{ (\tilde{\eta}, \tilde{c}) \in \mathbb{R}^A \times \mathbb{R} : (\eta , c ) \sim (\tilde{\eta}, \tilde{c} ) \right\}
\]
and $C_{\eta , c}$ its closure. 

\begin{proposition}
For any $(\eta , c)$, $C_{\eta, c}$ is a polyhedral cone and the collection $\mathcal{F}_{A, \alpha} = \{C_{\eta, c} \}$ forms a complete fan supported in $\mathbb{R}^A \times \mathbb{R}$.
\end{proposition}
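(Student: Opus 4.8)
The goal is to show that the cells $C_{\eta,c}$ partition $\mathbb{R}^A \times \mathbb{R}$ into a complete polyhedral fan. The plan is to realize the equivalence relation $\sim$ as refining the secondary fan $\mathcal{F}_{Sec\,A}$ by the additional linear data recording the coloring, and then verify the fan axioms cone-by-cone. First I would fix a reference cone: each $(\eta,c)$ determines a coherent subdivision $\mathcal{S}_\eta$, hence lands in some maximal secondary cone $C_{\mathcal{S}}^\circ$, and the projection $\mathbb{R}^A \times \mathbb{R} \to \mathbb{R}^A$ forgetting $c$ maps $C_{\eta,c}$ into $C_{\mathcal{S}}$. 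So the new fan will be a subdivision of the pullback of $\mathcal{F}_{Sec\,A}$ along this projection.

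The key reduction is Proposition~\ref{prop:zerosimplices}: the entire coloring is determined by the colors of the $0$-cells of $\mathcal{P}_\eta$. Each $0$-cell is dual to a maximal marked polytope $(Q_i,A_i)$ of $\mathcal{S}_\eta$ and equals a support function $g_{\eta,i} \in V^*$, with $f_\eta(g_{\eta,i}) = c_{\eta,i}$ by Proposition~\ref{prop:duality}. The color of this vertex is governed by the sign of the difference function $g(u) = f_\eta(u) - u(\alpha) - c$ evaluated at $g_{\eta,i}$, namely the sign of
\begin{align*}
c_{\eta,i} - g_{\eta,i}(\alpha) - c.
\end{align*}
Crucially, on the fixed secondary cone $C_{\mathcal{S}}^\circ$ the combinatorial type of $\mathcal{P}_\eta$ is constant, and $c_{\eta,i} - g_{\eta,i}(\alpha)$ is a \emph{linear} function of $\eta$: both $g_{\eta,i}$ and $c_{\eta,i}$ are the affine data of the facet $F_i$ of $Q_\eta$, which vary linearly in $\eta$ as long as the face poset of $Q_\eta$ does not change. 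Subtracting the linear functional $c$ then shows each vertex color is determined by the sign of a single affine-linear functional $\ell_i(\eta,c)$ on $C_{\mathcal{S}}^\circ \times \mathbb{R}$. I would thus write, over each maximal secondary cone, the refinement of $C_{\mathcal{S}} \times \mathbb{R}$ cut out by the finitely many hyperplanes $\{\ell_i = 0\}$, one per vertex of $\mathcal{P}$; the open cells $C_{\eta,c}^\circ$ are exactly the intersections of $C_{\mathcal{S}}^\circ \times \mathbb{R}$ with fixed sign conditions on all the $\ell_i$. This exhibits each $C_{\eta,c}$ as a polyhedral cone (intersection of half-spaces defining $C_{\mathcal{S}}$, the trivial constraints in the $c$-direction, and the coloring half-spaces), and shows the cells are relatively open cones whose closures are cones.

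The main obstacle, and the step deserving the most care, is checking that these cells glue into an honest fan across the walls of the secondary fan: I must verify that the refining hyperplanes $\{\ell_i = 0\}$ are compatible with the secondary-cone boundaries, so that the closure of any coloring cell is a union of coloring cells and the intersection of two closed cells is a common face. On a wall of $\mathcal{F}_{Sec\,A}$ the subdivision $\mathcal{S}_\eta$ degenerates, several vertices of $\mathcal{P}$ may merge, and the corresponding functionals $\ell_i$ coincide or specialize; I would handle this using the isotopy isomorphism $\iota_{\mathcal{P},\mathcal{P}'}$ of \eqref{eq:isotopy} to track vertices across isotopic complexes and the limiting behavior of $g_{\eta,i}, c_{\eta,i}$ as $\eta$ approaches the wall, invoking continuity of $f_\eta$ in $\eta$ to see the sign data passes to the boundary consistently. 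The non-compact rays require separate attention: Proposition~\ref{prop:rays} shows their asymptotic color is governed by the fixed sign vector $\mathbf{s}_{A,\alpha}$, so those cells impose no new hyperplanes but only constant sign constraints, which I would record as a reality check that the fan stays complete (every $(\eta,c)$ lies in some cell) and that no cell is cut out by contradictory conditions. Finally, completeness follows since every $(\eta,c)$ produces a well-defined painted complex, hence lies in exactly one open cell, and the union of closures covers $\mathbb{R}^A \times \mathbb{R}$.
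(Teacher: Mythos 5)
Your proposal is correct and follows essentially the same route as the paper: project to the secondary cone $C_{\mathcal{S}}$, then cut $C_{\mathcal{S}}^\circ\times\mathbb{R}$ by one linear functional per vertex of $\mathcal{P}_\eta$ recording the sign of $f_\eta(u)-u(\alpha)-c$ there; the paper just makes your functional $\ell_i$ explicit by writing $\alpha$ as an affine combination of the marked points dual to each vertex. Your extra attention to compatibility across secondary walls is more careful than the paper's closing remark but is not a different argument.
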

\begin{proof}
The fact that $C^\circ_{\eta, c}$ is a cone follows from the fact that if $\mathcal{S} = \mathcal{S}_\eta$ then $C^\circ_{\eta, c}$ projects to $C^\circ_{\mathcal{S}}$ which is a (relatively open) polyhedral cone in $\mathcal{F}_{Sec A}$. The remaining inequalities defining $C^\circ_{\eta , c}$ can be written using the color function $\kappa$ in the painted tropical $A$-complex $(\mathcal{P}, \mathcal{P}^r)$. This can be done as follows. 
For a zero dimensional face $\{p\}$ of $\mathcal{P}$  the correspondence from Proposition~\ref{prop:duality} gives a subset $\psi (\{p\}) = \{a^p_1, \ldots, a^p_{k_p}\}$. Here, the elements $a^p_1, \ldots, a^p_{k_p}$ affinely span $V$ (because $\{p\}$ is a zero dimensional face of $\mathcal{P}$). Thus there are constants $b_1^p, \ldots, b_{k_p}^p$ so that 
\begin{align*}
    \sum_{i = 1}^{k_p} b_i^p & = 1, \\
    \sum_{i = 1}^{k_p} b_i^p a_i^p & = \alpha. 
\end{align*}
Then, for any $1 \leq i \leq k_p$ define the linear function $b_p : \mathbb{R}^A \times \mathbb{R} \to \mathbb{R}$ via
\begin{align}
    b_p (\tilde{\eta}, \tilde{c}) & = - \tilde{c} +  \sum_{i = 1}^{k_p} b_i^p \tilde{\eta} (a_i^p)  .
\end{align}
Observe that for any $\tilde{\eta} \in C_S^\circ$ there is a zero cell $\tilde{p} \in \mathcal{P}_{\tilde{\eta}}$ which is isotopic to $p$. Then we have $\phi (\{\tilde{p}\})$ contains $\{a_1^p, \ldots , a_{k_p}^p\}$. This means that for every $1 \leq j \leq k$ we have
\begin{align*}
    f_{\tilde{\eta}} (\tilde{p} ) & = \tilde{p} ( a_j^p) + \tilde{\eta} (a_j^p),
\end{align*} 
so that 
\begin{align*} f_{\tilde{\eta}} (\tilde{p} ) & = \sum_{i = 1}^{k_p} b_i^p f_{\tilde{\eta}} (\tilde{p} ), \\ & = \sum_{i = 1}^{k_p} b_i^p \tilde{p} ( a_i^p) + \sum_{i = 1}^{k_p} b_i^p \tilde{\eta} (a_i^p), \\ & = \tilde{p} \left( \alpha \right) + \tilde{c} + b_p (\tilde{\eta} , \tilde{c} ) .
\end{align*}
From this, it is clear that specifying $b_p < 0$, $b_p = 0$, or $b_p > 0$ if $p$ is red, purple or blue respectively, will describe those pairs $(\tilde{\eta}, \tilde{c} )$ with $\tilde{p}$ sharing the color of $p$. Thus, assembling these inequalities and constraints for all zero cells in $\mathcal{P}$ describes $C_{\eta, c}^\circ$ as a polyhedral cone. Taking intersections of closures of these cones clearly converts some inequalities into constraints thus giving another cone in $\mathcal{F}_{A, \alpha}$.
\end{proof}

Our main result is that the fan $\mathcal{F}_{A, \alpha}$ is combinatorially equivalent to a secondary fan (and in fact linearly equivalent after quotienting by a lineality subspace). To show this, consider extending $A$ by a placing $V$ as a zero hyperplane and adding two new vertices to obtain
\begin{align}
\bar{A}_\alpha = \left\{ (a, 0) \in V \times \mathbb{R}: a \in A \right\} \cup \{(\alpha , 1), (\alpha , 2) \} \subset V \times \mathbb{R}.
\end{align}
We will use the suggestive notation
\begin{align}
    \rho & = (\alpha , 1), \\
    \beta & = (\alpha , 2).
\end{align}
Additionally, given a fan $\mathcal{F}$ supported in a vector space $W$ we will write $\mathcal{F} \times \mathbb{R}^n$ for the fan with cones $\sigma \times \mathbb{R}^n$ in $W \times \mathbb{R}^n$. 

\begin{theorem} \label{thm:mainthm}
The fan $\mathcal{F}_{A, \alpha} \times \mathbb{R}$ is isomorphic to $\mathcal{F}_{Sec \bar{A}_\alpha}$.
\end{theorem}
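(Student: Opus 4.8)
The plan is to exhibit an explicit linear isomorphism $L$ of the ambient spaces that carries the cones of $\mathcal{F}_{A,\alpha}\times\mathbb{R}$ onto those of $\mathcal{F}_{Sec \bar{A}_\alpha}$. Writing a function $\bar\eta\in\mathbb{R}^{\bar{A}_\alpha}$ as the triple $(\bar\eta|_A,\bar\eta(\rho),\bar\eta(\beta))$, I define $L:\mathbb{R}^A\times\mathbb{R}\times\mathbb{R}\to\mathbb{R}^{\bar{A}_\alpha}$ by sending $(\eta,c,s)$ to the $\bar\eta$ with $\bar\eta|_A=\eta$, $\bar\eta(\beta)=s$, and $\bar\eta(\rho)=\tfrac12(c+s)$; this is clearly invertible, with $c=2\bar\eta(\rho)-\bar\eta(\beta)$ and $s=\bar\eta(\beta)$. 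The first task is to check that $L$ matches lineality spaces. The lineality of $\mathcal{F}_{Sec \bar{A}_\alpha}$ consists of the restrictions to $\bar{A}_\alpha$ of affine functions $(x,h)\mapsto w(x)+eh+b$ on $V\times\mathbb{R}$; computing the corresponding $(\eta,c,s)$ shows that its preimage under $L$ is exactly the lineality of $\mathcal{F}_{A,\alpha}$ (described by $\eta\mapsto\eta+\ell|_A$, $c\mapsto c+\ell(\alpha)$ for affine $\ell$ on $V$) together with the free $s$-direction. Thus $L$ identifies the two lineality spaces, which both accounts for the factor $\times\mathbb{R}$ and reduces the statement to matching the essential cone structure.

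The core is to translate subdivisions of $\bar{A}_\alpha$ into painted complexes. Given $(\eta,c)$, set $\bar\eta=L(\eta,c,s)$ and consider the coherent subdivision $\mathcal{S}_{\bar\eta}$. Since $Q\times\{0\}$ is a facet of $\bar{Q}_\alpha=\convh(\bar{A}_\alpha)$ and $\bar\eta$ restricts to $\eta$ on the base points, the restriction of $\mathcal{S}_{\bar\eta}$ to this facet is precisely $\mathcal{S}_\eta$; hence the projection $\mathbb{R}^{\bar{A}_\alpha}\to\mathbb{R}^A$ carries $\mathcal{F}_{Sec \bar{A}_\alpha}$ onto $\mathcal{F}_{Sec A}$ and matches it with the projection of $\mathcal{F}_{A,\alpha}$ forgetting $c$. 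It therefore suffices to fix a base cone $C_{\mathcal{S}}$ and compare the two fans over it. The only remaining freedom in $\mathcal{S}_{\bar\eta}$ is whether and how the axial point $\rho$ is used above each maximal cell $\sigma_p$ of $\mathcal{S}$, where $p$ is the dual $0$-cell and $\psi(p)=\{a^p_1,\dots,a^p_{k_p}\}$. Using the affine relation $\rho=\tfrac12(\alpha,0)+\tfrac12\beta$ together with $(\alpha,0)=\sum_i b^p_i(a^p_i,0)$, I read off the secondary inequality governing this flip: $\rho$ lies above the lifted hull over $\sigma_p$ exactly when $\tfrac12\sum_i b^p_i\eta(a^p_i)-\bar\eta(\rho)+\tfrac12\bar\eta(\beta)>0$. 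Substituting the formula for $L$, this functional equals $\tfrac12\bigl(\sum_i b^p_i\eta(a^p_i)-c\bigr)=\tfrac12\,b_p(\eta,c)$, where $b_p$ is the linear function from the proof that $C_{\eta,c}$ is a polyhedral cone. Thus the secondary flip inequality at $\sigma_p$ has the same sign as the painting functional $b_p$, whose sign records the color of $p$ by Proposition~\ref{prop:zerosimplices}.

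Consequently, pulling back through $L$, the equalities and inequalities cutting out $C_{\mathcal{S}_{\bar\eta}}$ are exactly the base conditions defining $C_{\mathcal{S}}$ together with the conditions $b_p\gtrless 0$ (or $b_p=0$) for each $0$-cell $p$, and these are precisely the conditions defining $C_{\eta,c}^\circ\times\mathbb{R}$. Hence $L(C_{\eta,c}\times\mathbb{R})=C_{\mathcal{S}_{\bar\eta}}$, the assignment $(\eta,c)\mapsto\mathcal{S}_{\bar\eta}$ is constant on painted isotopy classes and independent of $s$ modulo lineality, and distinct classes give distinct subdivisions. Since $L$ is a linear isomorphism and both $\mathcal{F}_{A,\alpha}\times\mathbb{R}$ and $\mathcal{F}_{Sec \bar{A}_\alpha}$ are complete fans, the bijection on relatively open cones extends to all faces; by Proposition~\ref{prop:secondaryfan} it respects the face relation, because refinement of subdivisions of $\bar{A}_\alpha$ corresponds to refinement of painted complexes. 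This yields the desired isomorphism of fans.

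The step I expect to be the main obstacle is the claim that the entire cone $C_{\mathcal{S}_{\bar\eta}}$ is cut out by only the base conditions and the axial $\rho$-flips, i.e.\ that there are no further independent secondary inequalities for $\bar{A}_\alpha$. This requires a careful classification of the local flips of $\bar{A}_\alpha$: showing that $\beta$, being the unique topmost vertex over $\alpha$, is used in every coherent subdivision and contributes no independent inequality, and that the only nontrivial circuits beyond those of the base involve $\rho$ over a single maximal base cell, including the cells $\sigma_p$ whose relative interior does not contain $\alpha$ (where the coefficients $b^p_i$ need not be nonnegative). Establishing that the pointwise data ``is $\rho$ used above $\sigma_p$'' always assembles into a globally coherent subdivision, and conversely that every coherent subdivision of $\bar{A}_\alpha$ arises in this way, is the technical heart of the argument.
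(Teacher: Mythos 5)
Your outer reduction is sound and is essentially the paper's: both arguments identify the lineality space of $\mathcal{F}_{Sec \bar{A}_\alpha}$ with the restrictions to $\bar{A}_\alpha$ of affine functions on $V\times\mathbb{R}$ and then compare the fans on an explicit linear transversal; your map $L$ differs from the paper's section $i(\eta,c)=c\,e^*_\rho+c\,e^*_\beta+\sum_{a}\eta(a)e^*_{(a,0)}$ only by a change of complement. Your circuit computation is also correct: for the affine relation $\rho-\tfrac12\beta-\tfrac12\sum_i b^p_i(a^p_i,0)=0$ the secondary functional, after substituting $L$, equals $\tfrac12\,b_p(\eta,c)$, which is exactly the functional the paper uses to cut out $C^\circ_{\eta,c}$. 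So the local dictionary between flips at $\rho$ and colors of $0$-cells is right.

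The gap is the step you yourself flag at the end, and it is not a peripheral technicality but the entire content of the theorem: you assert without proof that the pullback of $C^\circ_{\mathcal{S}_{\bar\eta}}$ along $L$ is cut out by the base conditions together with the $b_p$ over maximal cells of $\mathcal{S}$, equivalently that a coherent subdivision of $\bar{A}_\alpha$ restricting to $\mathcal{S}$ on $Q\times\{0\}$ both is determined by, and realizes every consistent assignment of, the local $\rho$-flip signs. This is genuinely delicate. When $\alpha\notin Q$ (the case needed for multiplihedra), $\rho$ is a vertex of $\convh (\bar{A}_\alpha)$ and appears in every subdivision, so ``is $\rho$ used above $\sigma_p$'' is not a clean binary flip; moreover $\mathcal{S}_{\bar\eta}$ generally contains maximal cells that do not lie over any maximal cell of $\mathcal{S}$ at all, namely cells of the form $\convh\bigl((f\times\{0\})\cup\{\rho,\beta\}\bigr)$ over codimension-one faces $f$ of the base subdivision; these are dual to the purple compact edges of $\mathcal{P}_\eta$ and are invisible to a per-maximal-cell flip analysis. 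The paper closes exactly this gap by working on the dual side: it computes $f_\xi(u,d)=\min\{f_\eta(u),\,u(\alpha)+d+c,\,u(\alpha)+2d+c\}$ and classifies all vertices of $\mathcal{P}_\xi$ --- each vertex $u$ of $\mathcal{P}_\eta$ lifts to a unique $(u,d)$ whose marking gains $\rho$, $\beta$, or both according to the color of $u$, and a dimension count shows the only remaining vertices sit over purple $1$-cells on which $f_\eta-\alpha-c$ is nonconstant. Some analysis of this kind (or a full classification of the circuits of $\bar{A}_\alpha$ involving $\rho$ and $\beta$, including those over lower-dimensional faces and the non-compact behavior governed by the sign vector $\mathbf{s}_{A,\alpha}$) must be supplied before your argument is complete.
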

\begin{proof}
To prove this, we first consider the evaluation map associated to $\bar{A}_\alpha$. Let $\{e_{\bar{a}} : \bar{a} \in \bar{A}_\alpha \}$ be the standard basis of $\mathbb{R}^{\bar{A}_\alpha}$ and consider the linear map 
\begin{align*}
 \textnormal{ev}: \mathbb{R}^{\bar{A}_\alpha} \to V \times \mathbb{R} \times \mathbb{R}
\end{align*}
defined as
\begin{align*}
    \textnormal{ev} (e_{\bar{a}} ) = (\bar{a}, 1).
\end{align*}
Note that for $\bar{a} = (a, 0)$ we have $\textnormal{ev}(e_{\bar{a}} ) = (a, 0, 1)$ whereas 
\begin{align*}
    \textnormal{ev}(e_{\rho} ) & = (\alpha, 1, 1), \\
    \textnormal{ev}(e_{\beta} ) & = (\alpha, 2, 1).
\end{align*}
The secondary fan $\mathcal{F}_{Sec \bar{A}_\alpha}$ is naturally supported in the dual vector space $\left( \mathbb{R}^{\bar{A}_\alpha} \right)^*$ which we identify with real valued functions on $\bar{A}_\alpha$ (i.e. with $\mathbb{R}^{\bar{A}_\alpha}$ but using the dual basis). Recall that for any fan $\mathcal{F}$, the lineality space of $\mathcal{F}$ is the minimal cone in $\mathcal{F}$ or, equivalently, the maximal linear subspace of vectors which stabilize the fan under translation. Then a basic result from \cite{gkz}[Proposition~7.1.11] identifies the lineality space of $\mathcal{F}_{Sec \bar{A}_\alpha}$ as the image of the dual map 
\begin{align*}
  L := \textnormal{ev}^* (V^* \times \mathbb{R} \times \mathbb{R}) \subset \mathbb{R}^{\bar{A}_\alpha}.
\end{align*}

We note this because, if a vector subspace $U$ in $\mathbb{R}^N$ is transverse to the lineality space $L_\mathcal{F}$ of a fan $\mathcal{F}$, and $L_U$ is a subspace of $L_\mathcal{F}$ complementary to $U$, then it is easy to see that $\mathcal{F} \approx \mathcal{F}|_U \times L_U$. Here $\mathcal{F}|_U$ is the set of intersections of cones in $\mathcal{F}$ with the subspace $U$. Thus, the statement of the theorem follows if we produce an inclusion
\begin{align*}
    i : \mathbb{R}^A \times \mathbb{R} \to \mathbb{R}^{\bar{A}_\alpha}
\end{align*}
which is transverse to $L$ and which maps $\mathcal{F}_{A, \alpha}$ isomorphically to $\mathcal{F}_{Sec \bar{A}_\alpha}|_{\textnormal{im} (i )}$. Define this map as
\begin{align}
    i (\eta , c) = c e^*_\rho + c e^*_\beta + \sum_{a \in A} \eta (a) e^*_{(a, 0)}.
\end{align}
Note that $i$ is injective and the vector $\textnormal{ev}^* (0 , 1, 0) = 2 e_\beta^* + e_\rho^* \in L - \textnormal{im} (i )$ so that $L + \textnormal{im} (i ) = \mathbb{R}^{\bar{A}_\alpha}$ and thus $i$ is transverse to $L$.

Now, let $(\mathcal{P}, \kappa)$ be a painted tropical $A$-complex realized by $(\eta , c)$ so that $\mathcal{P}$ is isotopic to $\mathcal{P}_\eta$ and $\kappa$ is compatible with $\kappa_{\eta, c}$. Taking $\xi = i (\eta , c)$ to be its image, we claim that the tropical $A$-complex $\mathcal{P}_\xi$ is determined uniquely up to isotopy by $(\mathcal{P}, \kappa)$ so that 
\[
i (C^\circ_{\eta, c} ) = C^\circ_{\mathcal{S}_\xi} \cap \textnormal{im} (i). \]

To verify the claim, consider the tropical $A$-complex $(\mathcal{P}_\xi, f_\xi)$ where $f_\xi : V^* \times \mathbb{R} \to \mathbb{R}$ is the tropical polynomial. For $\bar{u} = (u, d) \in V^* \times \mathbb{R}$ we have 
\begin{align} \label{eq:fxi}
f_\xi (\bar{u}) & = \min \{\bar{u}(\bar{a}) + \xi (\bar{a}) : \bar{a} \in \bar{A}_{\alpha }\}, \\ \nonumber
& =  \min \left(\{u (a) + \eta (a) : a \in A\} \cup \{u (\alpha ) + d + c, u (\alpha ) + 2d  + c \} \right) , \\ \nonumber & =  \min \{f_\eta (u), u (\alpha ) + d + c, u (\alpha ) + 2d  + c \} .
\end{align}

Now, let $u \in V^*$ be a vertex of $\mathcal{P}_\eta$ which is dual to the set $\psi (u) \subset A$.  Note that for any such vertex, there is a unique vertex $\bar{u} = (u , d)$ in $\mathcal{P}_\xi (0)$ with first coordinate $u \in V^*$. Indeed, we define $d$ via the equation
\begin{align}
    d = \begin{cases} f_\eta (u) - u(\alpha ) - c & \textnormal{if } f_\eta (u) \geq u(\alpha ) + c, \\ \frac{1}{2} (f_\eta (u) - u(\alpha ) - c ) & \textnormal{if } f_\eta (u) \leq u(\alpha ) + c. \end{cases}
\end{align}
In the case where $f_\eta (u) > u (\alpha ) + c$ we have $d > 0$ implying that 
\[
\bar{u} (\rho ) + \xi (\rho ) = u (\alpha ) + d + c < u (\alpha ) + 2 d + c = \bar{u} (\beta ) + \xi (\beta ) 
\]
and 
\[
\bar{u} (\rho ) + \xi (\rho ) = u (\alpha ) + d + c = f_\eta (u)
\]
so that the minimum of $f_\xi$ is achieved at all $(a, 0)$ for $a \in \psi (u)$ and $\rho$. Take $\sigma$ to be the minimal cell of $\mathcal{P}_\xi$ containing $\bar{u}$ so that 
\begin{align}
    \psi^{\bar{A}_\alpha} (\sigma ) = \{\rho , (a, 0) : a \in \psi^A (u) \}.
\end{align} 
Since $\rho$ is not in $V$ and $u$ is a vertex of $\mathcal{P}_\eta$, the convex hull of $\psi^{\bar{A}_\alpha} (\bar{u})$ is full dimensional implying $\bar{u} = (u,d)$ is a vertex of $\mathcal{P}_\xi$.

To see that $(u,d)$ is unique amongst vertices of $\mathcal{P}_\xi$ with first coordinate $u$,  consider $\bar{\bar{u}} = (u, d^\prime)$ for some $d^\prime \ne d$. If $d^\prime > d$, then 
\[
\bar{\bar{u}} (\beta ) + \xi (\beta) > \bar{\bar{u}} (\rho) + \xi (\rho ) = f_\eta (u) + (d^\prime - d) > f_\xi (\bar{\bar{u}})
\]
so that $f_\xi$ achieves its minimum at the vertices $(a, 0)$ where $a \in \psi^A (u)$. As the convex hull of these vertices is not full dimensional, we see that $\bar{\bar{u}}$ is not a vertex. On the other hand, if $d^\prime < d$ then
\[
\bar{\bar{u}} (\rho) + \xi (\rho ) = f_\eta (u) + (d^\prime - d) < f_\eta (u).
\]
So $f_\xi (\bar{\bar{u}} )$ achieves its minimum on a subset of $\{\rho , \beta\}$ which, again, is not full dimensional. 

In the case where $f_\eta (u) < u (\alpha ) + c$ we have $d < 0$ and a similar argument shows that 
\begin{align}
    \psi^{\bar{A}_\alpha} (\bar{u} ) = \{\beta , (a, 0) : a \in \psi^A (u) \}.
\end{align} 
Finally, when we have equality and $f_\eta (u) = u(\alpha) + c$ we obtain $d = 0$ and 
\begin{align}
    \psi^{\bar{A}_\alpha} (\bar{u} ) = \{\rho, \beta , (a, 0) : a \in \psi^A (u) \}.
\end{align} 

Thus we obtain that every vertex of $\mathcal{P}_\eta$ yields a unique vertex of $\mathcal{P}_\xi$ and the dual maximal cell of the coherent subdivision $\mathcal{S}_\xi$ is determined by the coloring function $\kappa_{\eta, c} : \mathcal{P} (0) \to \{r, p, b\}$. 

To determine the remaining vertices of $\mathcal{P}_\xi$ we again appeal to a dimension argument. Suppose $\bar{u} = (u, d)$ is a vertex of $\mathcal{P}_\xi$, where $u$ is not a vertex of $\mathcal{P}_\eta$. Let $\sigma$ be the minimal cell of $\mathcal{P}_\eta$ containing $u$ in its relative interior. Because $\bar{u}$ is a vertex, equation~\ref{eq:fxi} implies $f_\xi (\bar{u} ) = f_\eta (u)$ and so
\[
\left\{ (a, 0) : a \in \psi^A (\sigma) \right\} \subset \psi^{\bar{A}_\alpha} (\bar{u}).
\]
As $\sigma$ is not $0$-dimensional, $\psi^A (\sigma)$ has codimension $\geq 1$ in $V$. But if its codimension is $> 1$ in $V$ then it is $> 2$ in $V \times \mathbb{R}$ which would imply that, even upon adding both $\rho$ and $\beta$, $\psi^{\bar{A}_\alpha} (\bar{u})$ would not be full dimensional. Thus $\psi^A (\sigma )$ must have codimension $1$, so that $\sigma$ is $1$-dimensional. Furthermore, since $\psi^A (\sigma )$ is codimension $2$ in $V \times \mathbb{R}$, both $\rho$ and $\beta$ must be included to obtain a full dimensional dual so that 
\[
\left\{ (a, 0) : a \in \psi^A (u) \right\} \cup \{\rho , \beta \} = \psi^{\bar{A}_\alpha} (\bar{u}).
\]
This, along with equation~\ref{eq:fxi} implies
\[
u (\alpha ) + d + c = \bar{u} (\rho ) + \xi (\rho ) = f_\eta (u) =  \bar{u} (\beta ) + \xi (\beta ) = u (\alpha ) + 2 d + c
\]
so that $d = 0$ and
\[
u(\alpha ) + c = f_\eta (u).
\]
The difference $f_\alpha - \alpha^* - c$ is affine on the one dimensional polyhedra $\sigma$, but if it is constantly zero on $\sigma$, then $\{(v, 0) : v \in \sigma\}$ is a cell of $\mathcal{P}_\xi$ containing $\bar{u}$ contradicting that $\bar{u}$ is a vertex. Thus it is not constant on $\sigma$ and  there exists at most one zero on its relative interior. Such cells in $\mathcal{P}(1)$ are precisely those for which $\kappa (\sigma ) = p$, and for which $\kappa$ is not identically $p$ on the boundary of $\sigma$. 

Altogether, the coherent subdivision $\mathcal{S}_\xi$ is uniquely determined by its (marked) maximal cells. In turn, the maximal cells of the marked subdivision $\mathcal{S}_\xi$ are determined by the zero dimensional cells of $\mathcal{P}_\xi$. Finally, these cells are uniquely determined by the painted tropical $A$-complex $(\mathcal{P}, \kappa)$, verifying the claim and the theorem.
\end{proof}

\begin{figure}[ht]
\begin{tikzpicture}[line width=2pt,line cap=rect]
    \draw[thick]  (3.5,0) -- (3.5,2);
    \draw[thick]  (3.5,2) -- (3,4);
    \draw[thick]  (3,4) -- (1.5,6.2);
    \draw[thick]  (1.5,6.2) -- (-1,6.2);
    \draw[thick]  (-1,6.2) -- (-2,3);
    \draw[thick]  (-2,3) -- (-2,0);
    \draw[thick]  (-2,0) -- (3.5,0);
    \node[inner sep=0] (image) at (-3.5,0) {\includegraphics[scale=.15]{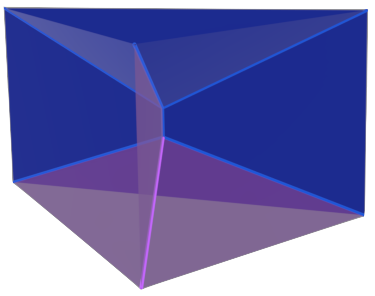}};
    \node[inner sep=0] (image) at (-3.5,3) {\includegraphics[scale=.15]{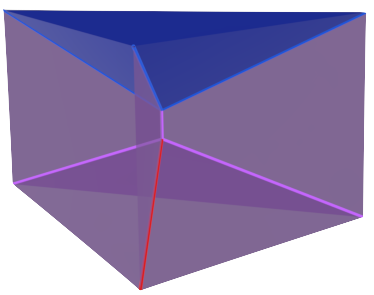}};
    \node[inner sep=0] (image) at (-2.5,6.5) {\includegraphics[scale=.15]{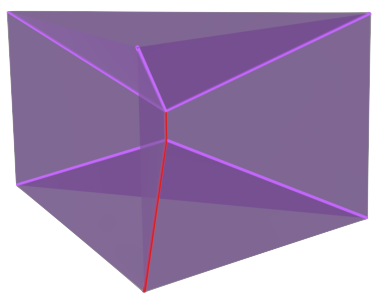}};
    \node[inner sep=0] (image) at (5.25,0) {\includegraphics[scale=.15]{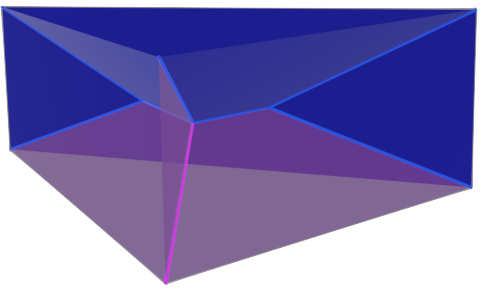}};
    \node[inner sep=0] (image) at (5.25,2.35) {\includegraphics[scale=.15]{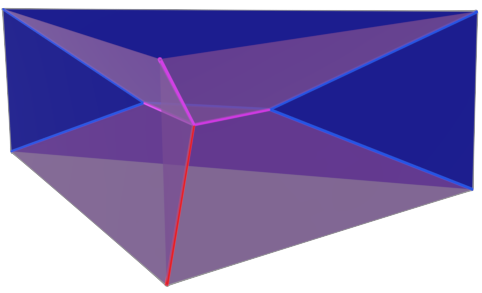}};
    \node[inner sep=0] (image) at (4.75,4.5) {\includegraphics[scale=.15]{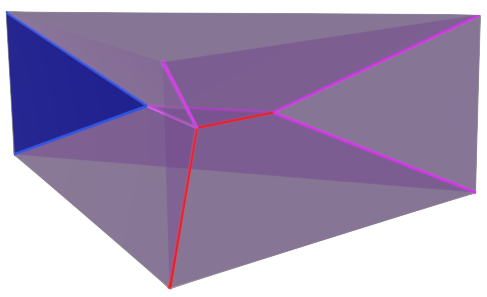}};
    \node[inner sep=0] (image) at (3.25,6.5) {\includegraphics[scale=.15]{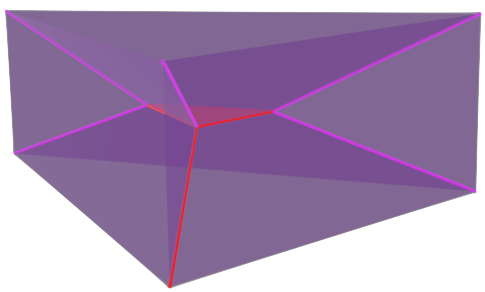}};
\end{tikzpicture}
\caption{\label{fig:paintingpolytope} The painting polytope of $(\tilde{Q},\tilde{A})$ by $\alpha$ along with the corresponding painted tropical $\tilde{A}$-complexes.}
\end{figure}
This theorem motivates the following definition.
\begin{definition}
The polytope $\Sigma (\bar{A}_\alpha )$ is called the \textbf{painting polytope} of $A$ by $\alpha$.
\end{definition}
\begin{example}
Recall that in Example~\ref{ex:bp0} we considered the circuit $(\tilde{Q}, \tilde{A})$ illustrated in Figure~\ref{fig:example1a2a}. We consider $\alpha = (1/2,1/3,1/2)$ which lies outside of $\tilde{Q}$, but near the facet $F$ with vertices $(1,0,0), (0,1,0), (0, 0, 1)$. Because if this location, Proposition~\ref{prop:rays} implies that the tropical ray dual to $F$ will be either purple or red, and all other rays will be either purple or blue. The painting polytope for $\tilde{A}$ by $\alpha$ along with the corresponding painted tropical $A$-complexes is illustrated in Figure~\ref{fig:paintingpolytope}.
\end{example}

\section{Multiplihedra as secondary polytopes} \label{sec:multiplihedra}

Our main application of Theorem~\ref{thm:mainthm} is to exhibit multiplihedra as a secondary polytope. First we fix an integer $k > 2$ take $Q$ to denote the $(m + 1)$-gon in $\mathbb{R}^2$ with vertices $A = \{a_0, \ldots, a_m \}$, ordered counter-clockwise, as its marking. Recall from \cite{gkz} that the secondary polytope of $(Q, A)$ is combinatorially equivalent to the associahedron $K_{m}$. For the remainder of this section, we will make this relationship explicit by relating a tropical $A$-complex $\mathcal{P}$ with the rooted planar tree formed by its $0$ and $1$-dimensional cells. 
The root of the tree is the dual ray of the edge from $a_m$ to $a_0$ of $Q$ and we direct the edges of the tree from the root to the leaves. To work with these trees, we write $\mathcal{P}^{cpt} (1)$ for the set of compact edges of $\mathcal{P}$. For any $e \in \mathcal{P}^{cpt} (1)$, denote its boundary points by $p_e$ and $q_e$ so that $e$ is directed from $p_e$ to $q_e$. Now, we will use the following lemma. 
\begin{lemma} \label{lemma:distance}
Let $\mathcal{P}$ be a tropical $A$-complex dual to $(Q,A)$. Then, for any $\beta \in \mathbb{R}^2$ not lying on the affine hull of a diagonal of $Q$, and any map 
\begin{align}
    \ell : \mathcal{P}^{cpt} (1) \to \mathbb{R}_{>0}
\end{align}
there exists an $\eta$ for which $\mathcal{P}_\eta$ is isotopic to $\mathcal{P}$ and 
\begin{align}
   \left| (f_\eta (q_{\bar{e}}) - \beta (q_{\bar{e}})) - (f_\eta (p_{\bar{e}}) - \beta (p_{\bar{e}})) \right| = \ell (e)
\end{align}
for all $e \in \mathcal{P}^{cpt} (1)$ with $\bar{e} \in \mathcal{P}_\eta^{cpt} (1)$ isotopic to $e$.
\end{lemma}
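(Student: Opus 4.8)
The plan is to fix the coherent subdivision $\mathcal{S}$ dual to $\mathcal{P}$ and to work inside the relatively open secondary cone $C_{\mathcal{S}}^\circ = \{\eta : \mathcal{S}_\eta = \mathcal{S}\}$, on which every $\mathcal{P}_\eta$ is isotopic to $\mathcal{P}$. Throughout I abbreviate the difference function in the statement by $g_\eta(u) = f_\eta(u) - \beta(u)$. The first step is a purely local computation on a single compact edge. If $\bar e$ is the bounded edge of $\mathcal{P}_\eta$ isotopic to $e$, then $\bar e$ is dual to a diagonal of $Q$ with endpoints $a_e, b_e \in A$, and its marking is $\psi(\bar e) = \{a_e, b_e\}$. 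Hence $f_\eta(u) = u(a_e) + \eta(a_e)$ for every $u$ on the closed edge $\bar e$, so $g_\eta$ restricted to $\bar e$ is the affine map $u \mapsto u(a_e - \beta) + \eta(a_e)$ and
\[
g_\eta(q_{\bar e}) - g_\eta(p_{\bar e}) = (q_{\bar e} - p_{\bar e})(a_e - \beta).
\]
Since $\bar e$ lies on the line $\{u : u(a_e - b_e) = \eta(b_e) - \eta(a_e)\}$, the covector $q_{\bar e} - p_{\bar e} \in V^*$ annihilates $a_e - b_e$, so it is a scalar multiple $\lambda_e(\eta)\, w_e$ of a fixed nonzero functional $w_e$ with $w_e(a_e - b_e) = 0$.

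Second, I would record the role of the hypothesis on $\beta$. If $w_e(a_e - \beta) = 0$ then $a_e - \beta \in \ker w_e = \mathrm{span}(a_e - b_e)$, i.e. $\beta$ lies on the affine line through $a_e$ and $b_e$, which is exactly the affine hull of the diagonal. As $\beta$ avoids all such lines, $w_e(a_e - \beta) \neq 0$ for every compact edge, and the displayed difference equals $\lambda_e(\eta)\, w_e(a_e - \beta)$ with a nonzero constant factor. Thus the desired condition $|g_\eta(q_{\bar e}) - g_\eta(p_{\bar e})| = \ell(e)$ is equivalent to prescribing the magnitude $|\lambda_e(\eta)| = \ell(e)/|w_e(a_e - \beta)|$.

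The heart of the matter, and the step I expect to be the main obstacle, is to show that these edge-coordinates can be assigned independently, i.e. that $\eta \mapsto (\lambda_e(\eta))_{e \in \mathcal{P}^{cpt}(1)}$ maps $C_\mathcal{S}^\circ$ onto an open orthant. Each $\lambda_e$ is linear in $\eta$, since the vertices $p_{\bar e}, q_{\bar e}$ solve linear systems determined by their dual cells; and the lineality space of affine functions on $V$ lies in the common kernel, because adding an affine function to $\eta$ merely translates $\mathcal{P}_\eta$ in $V^*$, leaving $q_{\bar e} - p_{\bar e}$ unchanged. Here I would invoke that the secondary polytope of a polygon is the associahedron $K_m$, which is simple: the face indexed by $\mathcal{S}$ has codimension equal to the number of diagonals of $\mathcal{S}$, which is exactly $|\mathcal{P}^{cpt}(1)|$, so its normal cone $C_\mathcal{S}$ is simplicial of that dimension modulo lineality. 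The facets of $C_\mathcal{S}$ are the walls on which a single diagonal degenerates, i.e. on which a single $\lambda_e$ vanishes; hence the $\lambda_e$ form a basis of linear functionals on $C_\mathcal{S}$ modulo lineality, and the map is a linear isomorphism carrying $C_\mathcal{S}^\circ$ bijectively onto the orthant $\{\mathrm{sign}(\lambda_e) = \epsilon_e\}$, where $\epsilon_e$ is the constant sign that $\lambda_e$ takes on the connected open cone (positive edge lengths). Equivalently, one may note that the bounded edges form a tree, so the lengths satisfy no cycle constraints and can be chosen freely; the simplicial-cone formulation is the cleaner route given the cited description of $K_m$.

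Finally, I would assemble the pieces. Set $\lambda_e := \epsilon_e\, \ell(e)/|w_e(a_e - \beta)|$, a point of the open orthant since each $\ell(e) > 0$. By the isomorphism above there is an $\eta \in C_\mathcal{S}^\circ$, unique modulo lineality, with $\lambda_e(\eta) = \lambda_e$ for all $e$. For this $\eta$, $\mathcal{P}_\eta$ is isotopic to $\mathcal{P}$ and
\[
\left| g_\eta(q_{\bar e}) - g_\eta(p_{\bar e}) \right| = |\lambda_e(\eta)|\, |w_e(a_e - \beta)| = \ell(e)
\]
for every compact edge, which is the claim.
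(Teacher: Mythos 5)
Your proof is correct, but it takes a genuinely different route from the paper's. You reduce the lemma to a global linear-algebra fact: after dividing out the factors $w_e(a_e-\beta)$ (nonzero exactly because $\beta$ avoids the affine hulls of the diagonals), the signed edge lengths $\lambda_e$ are linear functionals on the span of $C_{\mathcal{S}}$ that kill the lineality space and vanish precisely on the facets of $C_{\mathcal{S}}$; simplicity of the associahedron then makes $(\lambda_e)_e$ a linear coordinate system identifying $C_{\mathcal{S}}^\circ$ modulo lineality with an open orthant, so any positive length assignment is realized. The paper instead argues by explicit deformation: starting from any $\xi$ with $\mathcal{P}_\xi$ isotopic to $\mathcal{P}$, it first rescales $\xi$ so that every edge length is smaller than its target, and then, one edge at a time, adds a concave piecewise-linear function $h_t$ (zero on one side of the dual diagonal, affine on the other) lying in a face of $C_{\mathcal{S}}$; this multiplies the chosen edge's length by $1+t$ while leaving all other lengths unchanged. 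Your argument is shorter and yields slightly more --- the normalized lengths are genuine linear coordinates, so $\eta$ is unique modulo lineality --- but it leans on the simplicity of the associahedron and on identifying the facets of $C_{\mathcal{S}}$ with single-diagonal coarsenings; the paper's stretching construction is more elementary and self-contained, using only that each diagonal separates the polygon into two pieces. Both arguments are specific to the polygon setting, which is all the lemma claims. One caution: your parenthetical alternative (``the bounded edges form a tree, so the lengths can be chosen freely'') is not a proof on its own, since it ignores the coherence constraint on $\eta$; the simplicial-cone argument you actually run is the one that carries the weight, so you should keep it and drop the aside or flag it as heuristic.
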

The idea behind this lemma is that, for any given combinatorial type of planar tree, one can prescribe the length of all finite edges. 
\begin{proof}
We start by defining, for any $\tau \in \mathbb{R}^A$ and $e \in \mathcal{P}_\tau^{cpt}$ the function
\[
F_{\tau}(e) = \left| (f_\tau (q_e) - \beta (q_e)) - (f_\tau (p_e) - \beta (p_e)) \right|.
\]
Note that the condition that $\beta$ does not lie on the affine hull of a diagonal ensures that $F_\tau (e) > 0$ for every $e \in \mathcal{P}_\tau^{cpt} (1)$. This follows from a straightforward argument akin to the one given in Proposition~\ref{prop:rays}.

Suppose that $\mathcal{P}$ is realized by $\xi$ so that $\mathcal{P} = \mathcal{P}_\xi$. For a given compact edge $e^\prime \in \mathcal{P}^{cpt} (1)$ and a positive real number $t$ we show that there is an $\eta$ with isotopic tropical $A$-complex for which 
\begin{align} \label{eq:singleedge}
  F_\eta (\bar{e}) = \begin{cases} ( 1+ t) F_\xi (e) & \text{ if } \bar{e} \textnormal{ is isotopic to } e^\prime, \\ F_\xi (e) & \text{ otherwise}.   \end{cases}
\end{align}
Scaling $\xi$ by a small $\lambda$ will contract all the lengths of compact edges to a small value so that the difference on the left is arbitrarily small for all $\bar{e}$.  After making such a sufficiently small contraction, and applying equation~\ref{eq:singleedge} for each edge, we obtain the result. So it suffices to show equation~\ref{eq:singleedge}.

Let $\mathcal{S} = \{ (Q_i, A_i)\}$ be the subdivision dual to $\mathcal{P}$, $g_\xi : Q \to \mathbb{R}$ the piecewise affine function on $Q$ defining $\mathcal{S}$, and $f$ the dual diagonal to $e^\prime$. Then $f$ is a face of polygons $Q_j$ and $Q_k$ in $\mathcal{S}$ which are dual to $p_{e^\prime}$ and $q_{e^\prime}$ respectively. 

By Proposition~\ref{prop:duality}, $p_{e^\prime} = g_{k, \xi}$ and $q_{e^\prime} = g_{l, \xi}$ where $g_{k,\xi} - c_{k,\xi}$ and $g_{l,\xi} - c_{l,\xi}$ are the affine functions on $Q_k$ and $Q_l$ equal to $g_\xi$. Also by this proposition, we have that $f_\xi (p_{e^\prime}) = c_{k, \xi}$ and $f_\xi (q_{e^\prime}) = c_{l, \xi}$.

For $t > 0$, consider the function $h_t: Q \to \mathbb{R}$ defined as
\[
h_t(v) = \min \left\{ 0, t\left[(g_{l,\xi}(v) - c_{l, \xi}) - (g_{k,\xi} (v) - c_{k, \xi} ) \right]\right\}.
\]
Since $f$ is a diagonal of $Q$, it divides $Q$ into two polytopes $Q^\prime$ and $Q^{\prime \prime}$ where $Q_k \subset Q^\prime$ and $Q_l \subset Q^{\prime \prime}$. One observes that 
\begin{align} \label{eq:ht}
    h_t (v) = \begin{cases} 0 & \textnormal{ if } v \in Q^\prime, \\ t\left[(g_{l,\xi}(v) - c_{l, \xi}) - (g_{k,\xi} (v) - c_{k, \xi} ) \right] & \textnormal{ if } v \in Q^{\prime \prime}. \end{cases}
\end{align}
It follows that restricting $h_t$ to $A$ induces this subdivision and the cone $C_{h_t}$ is a face of $C_{\xi}$ by Proposition~\ref{prop:secondaryfan}.  In particular, this subdivision is a coarse subdivision which is refined by $\mathcal{S}_\xi$. 

Define
\[
\eta = \xi + h_t
\]
and observe that $\eta$ induces the same subdivision since $h_t$ lies in a face of the cone $C_{\xi}$. Now, for any compact edge $e \in \mathcal{P}^{cpt} (1)$ not equal to $e^\prime$, with boundary points $p_e, q_e$, we have that their dual polygons $Q_i = \psi^{A} (p_e)$ and $Q_j = \psi^{A} (q_e)$ are either both in $Q^\prime$ or $Q^{\prime \prime}$. Recall that $Q_{\eta}$ is the polyhedron formed by taking the convex hulls of rays $(a, r)$ where $r \leq - \eta (a)$. The compact facets $F_i$ and $F_j$ projecting to $Q_i$ and $Q_j$ respectively and, over $Q_i$ and $Q_j$ are graphs of $g_{i,\xi} - c_{i, \xi} + h_t$ and $g_{j, \xi} - c_{j,\xi} + h_t$. But for such $e$, $h_t$ will be the same affine function for $i$ and $j$ in this case so that
\begin{align*}
    g_{i, \eta} - g_{j, \eta} & = g_{i, \xi} - g_{j, \xi},\\
    c_{i, \eta} - c_{j, \eta} & = c_{i, \xi} - c_{j, \xi}.
\end{align*}
Thus, writing $\bar{e}$ for the edge $\iota_{\mathcal{P}, \mathcal{P}_\eta} (e)$ in $\mathcal{P}_\eta$ isotopic to $e$, it again follows from Proposition~\ref{prop:duality} that 
\[
F_\eta (\bar{e}) = F_\xi (e).
\]

However, for $e^\prime$, the dual polygons $Q_k$ and $Q_l$ are in $Q^\prime$ and $Q^{\prime \prime}$ respectively. This implies
\begin{align*}
    g_{l, \eta} - g_{k, \eta} & = g_{l, \xi}  + t (g_{l,\xi}(v)  - g_{k,\xi} (v) ) - g_{k, \xi}  = (1 + t) (g_{l, \eta} - g_{k, \eta} ) ,\\
     c_{l, \eta} - c_{k, \eta} & = c_{l, \xi}  + t (c_{l,\xi}(v)  - c_{k,\xi} (v) ) - c_{k, \xi}  = (1 + t) (c_{l, \eta} - c_{k, \eta} ).
\end{align*}
Thus 
\[
F_\eta (\bar{e}^\prime) = (1 + t) F_\xi (e^\prime )
\]
implying the result.
\end{proof}
We utilize this result to see that all painted trees can be realized as painted tropical $A$-complexes. To do this, we first recall the description of painted trees from \cite{forcey} based on \cite{boavog}. In this reference, Forcey refers to the vertices of the tree as nodes, allows for bivalent nodes, and paints some of the edges with particular rules on when and how the painting changes. We will additionally direct the edges of any rooted planar tree from the root to the leaves and consider the root and leaves as half edges (i.e. we do not consider there to be root or leaf nodes).
\begin{definition} \label{def:paintedtree}
A painted tree is a planar rooted tree, with each edge either painted or unpainted, satisfying the following conditions
\begin{enumerate}
    \item the root is painted,
    \item the leaves are unpainted,
    \item for a bivalent node, the incoming edge must be painted and the outgoing unpainted,
    \item at any node which is not bivalent, the painting of adjacent edges can be 
    \begin{enumerate}[label=(\alph*)]
        \item  all unpainted,
        \item  all painted,
        \item incoming painted and all outgoing unpainted.
    \end{enumerate}
\end{enumerate}
Given painted trees $t$ and $t^\prime$, we say that $t \prec t^\prime$ if $t^\prime$ can be obtained from $t$ through a sequence of edge contractions.
\end{definition}

One of the main results from \cite{forcey} is the realization of a polytope $\mathcal{J} (m)$, called the multiplihedron, whose face lattice is isomorphic to the poset of painted trees with $m$ leaves. We now show that such multiplihedra can be realized as secondary polytopes using painted tropical $A$-complexes. To do so, we choose an $\alpha \in \mathbb{R}^2$ near the edge of $Q$ from $a_m$ to $a_0$, but outside of $Q$. This is illustrated in Figure~\ref{fig:alpha} for $Q$ a square.
\begin{figure}[ht]
\begin{tikzpicture}[cross line/.style={preaction={draw=white, -, line width=6pt}}]
    \node at (2.6,-1.3) (node A) {$a_0$};
    \node at (-.6,-1.3) (node B) {$a_3$};
    \node at (2.6,1.6) (node C) {$a_1$};
    \node at (-.6,1.6) (node D) {$a_2$};
    \node at (1,-1.8) (node E) {$\alpha$};
    \node[inner sep=0] (image) at (1,0) {\includegraphics[scale=.2]{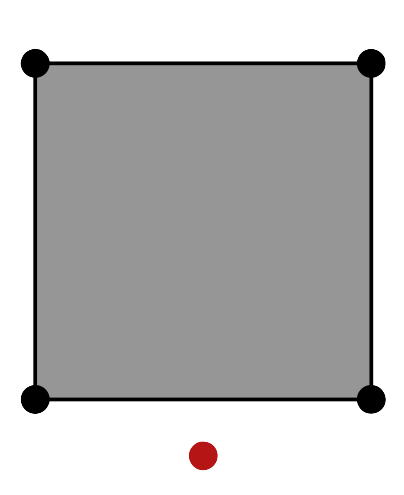}};
\end{tikzpicture}
\caption{\label{fig:alpha} Choice of $\alpha$ outside of edge of $Q$}
\end{figure}
To be more precise in the describing the choice of $\alpha$, let $\nu_i$ be the linear support functions for $Q$ as in equation~\ref{eq:support} for which 
\begin{align*}
    \nu_i (a_{i - 1}) = d_i =  \nu_i (a_i)
\end{align*}
where we index $a_j$ mod $(m + 1)$. Then we assume that $\alpha$ satisfies
\begin{align*}
    \nu_0 (\alpha ) & < d_0, \\
    \nu_j (\alpha ) & > d_j, \text{ for all }j \ne 0. 
\end{align*}
Thus the sign vector defined in equation~\ref{eq:signvec} is 
\[
\mathbf{s}_{A, \alpha} = (+ , - , -, \cdots , - ).
\]
\begin{theorem}
The $m$-th multiplihedron is combinatorially equivalent to the secondary polytope $\Sigma (\bar{A}_\alpha)$. 
\end{theorem}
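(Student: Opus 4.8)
The plan is to combine Theorem~\ref{thm:mainthm} with Forcey's identification of the multiplihedron's face lattice, reducing everything to a purely combinatorial bijection between painted tropical $A$-complexes and painted trees. By Theorem~\ref{thm:mainthm} we have $\mathcal{F}_{A,\alpha} \times \mathbb{R} \cong \mathcal{F}_{Sec \bar{A}_\alpha}$, and since $\mathcal{F}_{Sec \bar{A}_\alpha}$ is the normal fan of $\Sigma(\bar{A}_\alpha)$, the face lattice of $\Sigma(\bar{A}_\alpha)$ is isomorphic to the poset of painted tropical $A$-complexes relative to $\alpha$ under refinement (the extra $\mathbb{R}$ factor is lineality and is combinatorially invisible). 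On Forcey's side, $\mathcal{J}(m)$ has face lattice isomorphic to the poset of painted trees with $m$ leaves under edge contraction (Definition~\ref{def:paintedtree}). So it suffices to build a poset isomorphism between painted tropical $A$-complexes for the $(m+1)$-gon with the chosen $\alpha$ and painted trees with $m$ leaves.

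First I would record the underlying unpainted correspondence already set up above: the $0$- and $1$-cells of a tropical $A$-complex dual to the $(m+1)$-gon form a rooted planar tree whose root is the dual ray of the edge $a_m a_0$ and whose $m$ leaves are the dual rays of the remaining edges; these are exactly the trees indexing faces of the associahedron $K_m$. To overlay the painting, set $g(u) = f_\eta(u) - u(\alpha) - c$. Since $f_\eta$ is concave and $u \mapsto u(\alpha) + c$ is affine, the super-level set $\{g \geq 0\}$ is convex. The sign vector $(+,-,\ldots,-)$ then forces, via Proposition~\ref{prop:rays}, that $g > 0$ far out along the root ray and $g < 0$ far out along each leaf ray, so the red region is a convex set meeting the root ray and no leaf ray. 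I would declare an edge painted exactly where $\kappa = r$ and unpainted where $\kappa = b$; the crucial point is that convexity of $\{g \geq 0\}$ forces the painted edges to form a connected subtree containing the root, which is precisely Forcey's connectivity requirement.

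Next I would match the local painting data to the node rules of Definition~\ref{def:paintedtree}. A purple $1$-cell whose boundary is not constantly purple carries exactly one interior zero of $g$ (this is established inside the proof of Theorem~\ref{thm:mainthm}); I record such a cell as a bivalent node splitting the edge into a rootward red part and a leafward blue part, which is exactly rule (3). At an honest node, that is a vertex of $\mathcal{P}$ dual to a subdivision polygon, affineness of $g$ on the dual cell together with convexity of $\{g \geq 0\}$ restricts the adjacent colors to all red, all blue, or incoming red with all outgoing blue, matching rules 4(a)--(c); and the sign conditions on $\alpha$ give rules (1) and (2). Conversely, realizability of every painted tree follows from Lemma~\ref{lemma:distance}: choose any $\eta$ realizing the prescribed dissection, use the lemma to fix the finite edge lengths so that the convex boundary $\{g = 0\}$ crosses precisely the prescribed antichain of edges, and pick $c$ accordingly. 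Together with Proposition~\ref{prop:zerosimplices}, which says the painting is determined by its values on $0$-cells, this yields a bijection.

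Finally I would verify that the bijection is order-preserving, whence a poset isomorphism. A codimension-one degeneration of a painted tropical $A$-complex either coarsens the underlying subdivision---contracting an internal edge of the tree---or slides the painting boundary through a vertex, either absorbing a bivalent node or collapsing a rule-4(c) node to a monochromatic one; in each case this is exactly an edge contraction of painted trees. Chaining this isomorphism with the identifications of the first paragraph gives the combinatorial equivalence of $\mathcal{J}(m)$ with $\Sigma(\bar{A}_\alpha)$. The hard part will be the bookkeeping of the second and third paragraphs: confirming that the three-color local data reproduces Forcey's painting rules with no spurious or missing configurations, and that Lemma~\ref{lemma:distance} genuinely realizes every admissible painting, i.e. that the convex painting boundary can be made to cross any prescribed antichain of edges while leaving all other cells monochromatic.
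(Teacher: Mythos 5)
Your overall architecture is the same as the paper's: invoke Theorem~\ref{thm:mainthm} to identify the face lattice of $\Sigma(\bar A_\alpha)$ with the poset of painted tropical $A$-complexes, use Forcey's identification of $\mathcal{J}(m)$ with painted trees, build the dictionary (red $=$ painted, blue $=$ unpainted, purple $1$-cell $=$ bivalent node), and use Lemma~\ref{lemma:distance} for surjectivity. The gap is in the step you yourself flag as the crux: you derive Forcey's rules (1)--(4) from ``concavity of $g = f_\eta - \alpha^* - c$, hence convexity of $\{g\ge 0\}$, hence the painted edges form a rootward subtree.'' That implication is false in general. A convex subset of the plane can meet an embedded tree in a disconnected set (a half-plane $\{y\ge 1/2\}$ meets the V-shaped tree with vertex at the origin and arms going up in two disjoint arcs missing the vertex), and the same failure occurs locally: take a trivalent node at the origin with incoming edge toward $(0,-1)$ and outgoing edges toward $(\pm 1,1)$, and the concave (linear) function $g(x,y)=x+\epsilon y$; then the incoming edge is blue while one outgoing edge is red, violating rule (4), even though $\{g\ge0\}$ is convex and the behavior at infinity on the rays is whatever you like. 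So concavity plus the sign vector's control of the rays does not by itself exclude spurious configurations.

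What actually makes the dictionary work --- and what the paper uses --- is the stronger, edgewise statement that $g$ is \emph{strictly decreasing along every edge directed from the root to the leaves}. This does not follow from concavity alone; it follows from the placement of $\alpha$. Concretely, for a compact edge $e$ dual to a diagonal separating $Q_k$ (rootward) from $Q_l$ (leafward), one has $g(p_e)-g(q_e) = (g_{k,\xi}-g_{l,\xi})(v)-(g_{k,\xi}-g_{l,\xi})(\alpha)$ for $v$ on the diagonal, whose sign records which side of that diagonal $\alpha$ lies on; the hypothesis $\nu_0(\alpha)<d_0$, $\nu_j(\alpha)>d_j$ for $j\ne 0$ places $\alpha$ on the rootward side of \emph{every} diagonal of $Q$, forcing $g(p_e)>g(q_e)$. (This is the same computation underlying the nonvanishing claim $F_\tau(e)>0$ in Lemma~\ref{lemma:distance}.) Once you have this monotonicity, rules (1)--(4) and the single interior zero on a purple edge all follow immediately, and your surjectivity step also becomes honest: monotonicity guarantees the level set $\{g=c'\}$ meets each root-to-leaf path exactly once, so prescribing edge lengths as in the paper (e.g.\ $1/m$, $1-\delta(e)/m$, $2$ according to the target color of $q_e$) really does realize any prescribed antichain. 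Replace the convexity appeal with this monotonicity lemma and the rest of your outline goes through as in the paper.
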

\begin{proof}
As was mentioned above, for any $\xi \in \mathbb{R}^A$ we direct  the edges of the planar tree $\mathcal{P}_\eta$ from the root to the leaves. For any painted tropical $A$-complex $\mathcal{P}$, we can define a unique painted tree $t_\mathcal{P}$ in the following way. Paint each red edge and leave every blue edge unpainted. For each purple edge $e$, introduce a bivalent node and paint the incoming edge while leaving outgoing edge unpainted. Let us show that this prescription does indeed produce a painted tree by verifying properties (1) - (4) of Definition~\ref{def:paintedtree}.

Property (3) is automatically satisfied by the construction. For property (1) we use Proposition~\ref{prop:rays} and the sign vector of $\alpha$ to see that $f_\eta  - \alpha^* \gg 0$ outside of a sufficiently large bounded set. This implies it is colored either red or purple. Thus the root edge of the painted tree will always be painted. Likewise, Proposition~\ref{prop:rays} asserts that $f_\eta - \alpha^* \ll 0$ outside of a sufficiently large bounded set, which implies it is colored either blue or purple. Thus the leaves of the painted tree will always be unpainted confirming property (2). Finally, if a vertex of $\mathcal{P}$ is red or blue, then the adjacent edges of the associated painted tree are all painted or unpainted respectively. If the vertex is purple, then using the fact $f_\eta$ is concave and is decreasing along leaves and the root, it follows that $f_\eta - \alpha^*$ is strictly decreasing along every edge, thus all outgoing edges have $f_\eta - \alpha^* - c < 0$ along their relative interiors while the incoming edge has $f_\eta - \alpha^* - c > 0$ (and equality at the vertex). But this implies the associated painted tree has all outgoing edges unpainted and the incoming edge painted. Thus property (4) holds as well.

It is immediate that if $\mathcal{P} \prec \mathcal{P}^\prime$ then $t_{\mathcal{P}} \prec t_{\mathcal{P}^\prime}$ as the partial order of painted $A$-complexes arises as either contraction or color change by changing $c$ so that $f_\eta (u) = u(\alpha) + c$ at a vertex $u$. This latter operation corresponds to contracting one or more edges with a bivalent vertex. Thus we have a map of the face lattice of the secondary polytope $\Sigma (\bar{A}_\alpha)$ to that of $\mathcal{J}(m)$. As we can clearly invert the operation to obtain a color map $\kappa$ on a tropical $A$-complex $\mathcal{P}$, it is immediate that this map is injective. However, it is not immediate that the color map $\kappa$ so obtained is always of the form $\kappa_{\eta, c}$ so we must show surjectivity. 

The fact that any painted tree is of the form $t_{\mathcal{P}}$ for a painted tropical $A$-complex $(\mathcal{P}, \kappa)$ follows from Lemma~\ref{lemma:distance} and the earlier observation that $f_\eta - \alpha^*$ is decreasing along all edges of $\mathcal{P}$. Let $t$ be a painted tree and,  suppose   $(\mathcal{P}, \kappa)$ is a tropical $A$-complex which has the same combinatorial type after forgetting bivalent vertices, and whose color function produces $t$. We fix $u$ to be the vertex of the root ray. If the root ray is colored purple (resp. $u$ is purple), we may choose $\xi$ with $\mathcal{P}_\xi$ isotopic to $\mathcal{P}$ and take $c > f_\xi (u) - u(\alpha)$ (resp. $c = f_\xi (u) - u(\alpha)$) to produce $\kappa = \kappa_{\xi, c}$. Thus we will assume that the root ray and vertex are both red.

\begin{figure}[ht]
\begin{tikzpicture}[cross line/.style={preaction={draw=white, -, line width=6pt}}]
    \node[inner sep=0] (image) at (0,0) {\includegraphics[scale=.25]{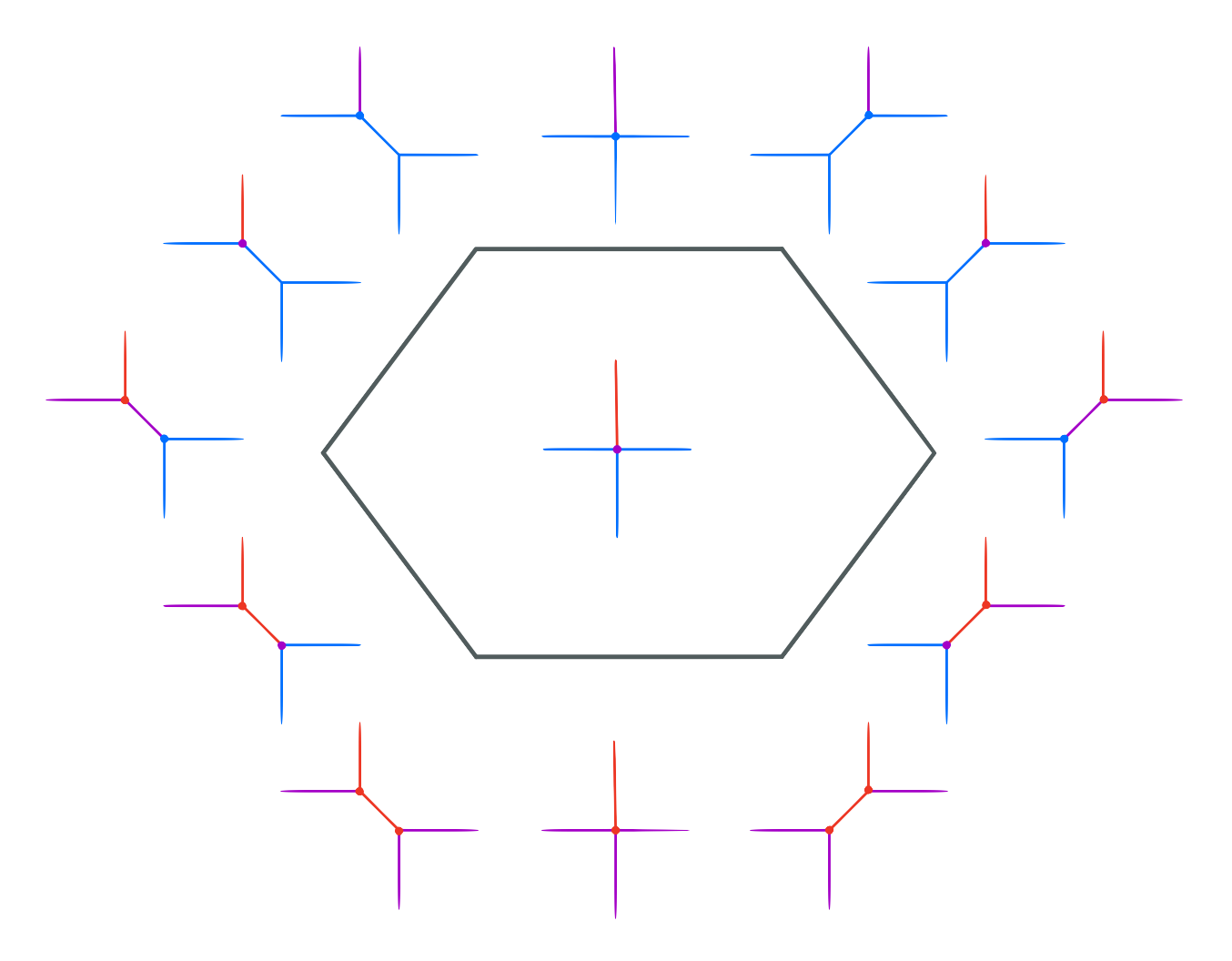}};
\end{tikzpicture}
\caption{\label{fig:paintedtrees}The multiplihedron $\mathcal{J}(3)$ with painted trees and their corresponding faces.}
\end{figure}

Now, recall that we label the boundary points of a  compact edge $e$ as $p_e, q_e$ directed away from the root. Define $\delta (e)$ to be the number of edges to make the unique path from $p_e$ to the vertex $u$ of the root ray and consider the function
$\ell : \mathcal{P}^{cpt} (1) \to \mathbb{R}_{> 0}$ to be 
\begin{align*}
    \ell (e) & = \begin{cases} \frac{1}{m} & \textnormal{ if } \kappa (q_e) = r, \\ 1 - \frac{\delta (e)}{m} & \textnormal{ if } \kappa (q_e) = p, \\ 2 & \textnormal{ if } \kappa (q_e) = b. \end{cases}
\end{align*}
Lemma~\ref{lemma:distance} then asserts the existence of $\eta$ for which 
\[
\left| (f_\eta (q_{\bar{e}}) - \alpha (q_{\bar{e}})) - (f_\eta (p_{\bar{e}}) - \alpha (p_{\bar{e}})) \right| = \ell (e)
\]
Because $f_\eta - \alpha^*$ is decreasing along edges, we can eliminate the absolute value to obtain
\[
 (f_\eta (p_{\bar{e}}) - \alpha (p_{\bar{e}})) - (f_\eta (q_{\bar{e}}) - \alpha (q_{\bar{e}})) = \ell (e).
\]
Let $\bar{u}$ be the root vertex of $\mathcal{P}_\eta$ isotopic to $u$ in $\mathcal{P}$. For a vertex $v$ of $\mathcal{P}_\eta$ other than $\bar{u}$ we may take the (non-empty) unique path of edges $e_1, \ldots, e_k$ starting at $\bar{u}$ to see that 
\[
(f_\eta (\bar{u}) - \alpha (\bar{u})) - (f_\eta (v) - \alpha (v)) = \sum_{i = 1}^k \ell (e_i).
\]
If $\kappa (v) = r$, then all of the edges in the sequence must be red and since there are less than $m$ compact edges we have 
\[
(f_\eta (\bar{u}) - \alpha (\bar{u})) - (f_\eta (v) - \alpha (v)) < 1 \textnormal{ if } \kappa (v) = r.
\]
If $\kappa (v) = p$, then every edge except $e_k$ has $\ell (e) = 1/m$ and $\ell (e_k) = 1 - (k - 1)/ m$ so that 
\[
(f_\eta (\bar{u}) - \alpha (\bar{u})) - (f_\eta (v) - \alpha (v)) = 1 \textnormal{ if } \kappa (v) = p.
\]
Finally, the last case that occurs is
\[
(f_\eta (\bar{u}) - \alpha (\bar{u})) - (f_\eta (v) - \alpha (v)) > 1 \textnormal{ if } \kappa (v) = b.
\]
Thus, taking 
\[
c = f_\eta (\bar{u}) - \alpha (\bar{u}) - 1
\]
one verifies that $\kappa = \kappa_{\eta, c}$.
\end{proof}
For $Q$ and $\alpha$ as in Figure~\ref{fig:alpha}, we illustrate in Figure~\ref{fig:paintedtrees} the painting polytope of $Q$ by $\alpha$ which simultaneously describes the well known painted trees of the multiplihedron $\mathcal{J}(3)$.

\bibliography{painted}{}
\bibliographystyle{plain}
\end{document}